\definecolor{darkblue}{rgb}{0.0,0.0,0.3}
\theoremstyle{plain}
\newtheorem{thm}{Theorem}[section]
\newtheorem{cor}[thm]{Corollary}
\newtheorem{prop}[thm]{Proposition}
\newtheorem{lem}[thm]{Lemma}
\newtheorem*{csconj}{Connes--Sullivan Conjecture (Zimmer's Theorem)}
\theoremstyle{definition}
\newtheorem{defn}[thm]{Definition}
\newtheorem{rem}[thm]{Remark}
\numberwithin{equation}{section}
\newcommand{\bC}{{\mathbb{C}}}
\newcommand{\bN}{{\mathbb{N}}}
\newcommand{\B}{{\mathcal{B}}}
\newcommand{\C}{{\mathcal{C}}}
\newcommand{\D}{{\mathcal{D}}}
\renewcommand{\P}{{\mathcal{P}}}
\renewcommand{\S}{{\mathcal{S}}}
\newcommand{\rcp}[2]{{#1} \rtimes_\mathrm{r} {#2}}
\newcommand{\ca}{\mathrm{C}^*}
\newcommand{\fb}{\partial_F G}
\newcommand{\aut}{\operatorname{Aut}}
\newcommand{\core}{\operatorname{core}}
\newcommand{\IB}{B}
\newcommand{\IN}{{\mathbb N}}
\newcommand{\IC}{{\mathbb C}}
\newcommand{\IR}{{\mathbb R}}
\newcommand{\cH}{{\mathcal H}}
\newcommand{\cK}{{\mathcal K}}
\newcommand{\cL}{{\mathcal L}}
\newcommand{\id}{\mathrm{id}}
\newcommand{\eps}{\varepsilon}
\DeclareMathOperator{\ran}{ran}
\DeclareMathOperator{\PGL}{PGL}
\newcommand{\ip}[1]{\mathopen{\langle}#1\mathclose{\rangle}}
\begin{document}

\title[C*-simplicity and the unique trace property]{C*-simplicity and the unique trace property for discrete groups}

\author[E. Breuillard]{Emmanuel Breuillard}
\address{Laboratoire de Math\'ematiques\\Universit\'e Paris-Sud 11\\
91405 Orsay cedex, France}
\email{emmanuel.breuillard@math.u-psud.fr}

\author[M. Kalantar]{Mehrdad Kalantar}
\address{Department of Mathematics\\University of Houston\\
Houston, TX, 77204-3008, United States}
\email{kalantar@math.uh.edu}

\author[M. Kennedy]{Matthew Kennedy}
\address{Department of Pure Mathematics\\University of Waterloo\\
Waterloo, ON, N2L 3G1, Canada}
\email{matt.kennedy@uwaterloo.ca}

\author[N. Ozawa]{Narutaka Ozawa}
\address{Research Institute for Mathematical Sciences\\Kyoto University\\
Kyoto 606-8502, Japan}
\email{narutaka@kurims.kyoto-u.ac.jp}

\begin{abstract}
A discrete group is said to be C*-simple if its reduced C*-algebra is simple, and is said to have the unique trace property if its reduced C*-algebra has a unique tracial state. A dynamical characterization of C*-simplicity was recently obtained by the second and third named authors. In this paper, we introduce new methods for working with group and crossed product C*-algebras that allow us to take the study of C*-simplicity a step further, and in addition to settle the longstanding open problem of characterizing groups with the unique trace property. We give a new and self-contained proof of the aforementioned characterization of C*-simplicity. This yields a new characterization of C*-simplicity in terms of the weak containment of quasi-regular representations. We introduce a convenient algebraic condition that implies C*-simplicity, and show that this condition is satisfied by a vast class of groups, encompassing virtually all previously known examples as well as many new ones. We also settle a question of Skandalis and de la Harpe on the simplicity of reduced crossed products. Finally, we introduce a new property for discrete groups that is closely related to C*-simplicity, and use it to prove a broad generalization of a theorem of Zimmer, originally conjectured by Connes and Sullivan, about amenable actions.
\end{abstract}

\subjclass[2010]{Primary 46L35; Secondary 20F65, 37A20, 43A07}
\keywords{group actions, Furstenberg boundary, reduced C*-algebra, simplicity, unique trace}
\thanks{First author supported by ERC Grant Number 617129.}
\thanks{Third author supported by NSERC Grant Number 418585.}
\thanks{Fourth author supported by JSPS KAKENHI Grant Number 26400114.}
\maketitle

\setcounter{tocdepth}{1}

\tableofcontents

\section{Introduction}

Let $G$ be a discrete group. Recall that the reduced C*-algebra $\ca_r(G)$ of $G$ is the norm closure of the algebra of operators on $\ell^2(G)$ generated by the left regular representation $\lambda_G$ of $G$. The group $G$ is said to be {\em C*-simple} if $\ca_r(G)$ is simple, meaning that the only norm-closed two-sided ideals in $\ca_r(G)$ are zero and $\ca_r(G)$ itself. Recall further that a trace on $\ca_r(G)$ is a tracial state, i.e. a unital positive $G$-invariant linear functional on $\ca_r(G)$. The group $G$ is said to have the {\em unique trace property} if $\ca_r(G)$ has a unique trace, namely the canonical trace $\tau_\lambda$ defined by $\tau_\lambda(a):=\langle a\delta_e,\delta_e\rangle$ for $a \in \ca_r(G)$.

Since Powers' proof \cite{P1975} in 1975 that the free group on two generators is both C*-simple and has the unique trace property, it had been a major open problem to characterize groups with either of these properties, and in particular to determine whether they are equivalent (see e.g. \cite{D2007} for this fact, and for a nice general survey of the subject matter).

It has long been recognized that the simplicity of the reduced C*-algebra $\ca_r(G)$, and more generally of reduced crossed product C*-algebras of the form $C(X) \rtimes_r G$, where $X$ is a compact $G$-space, is related to the topological dynamics of the $G$-action on $X$ (see in particular the work of Kawamura and Tomiyama \cite{KT1990}, and the work of Archbold and Spielberg \cite{AS1994}).

Recently, the second and third named authors \cite{KK2014}*{Theorem 6.2} established that the dynamical properties of the Furstenberg boundary $\fb$ of $G$, a compact $G$-space that is well known to researchers in dynamics, completely determines whether $G$ is C*-simple.

\begin{thm}[\cite{KK2014}] \label{thm:c-star-simplicity}
A discrete group is C*-simple if and only if its action on the Furstenberg boundary $\partial_F G$ of $G$ is free.
\end{thm}

In this paper, we introduce new methods for working with group and crossed product C*-algebras. We take a dynamical point of view, beginning with a preliminary  study of the Furstenberg boundary. In particular, we obtain a new and self-contained proof of Theorem~\ref{thm:c-star-simplicity}, which proceeds essentially from first principles, and requires no advanced operator-algebraic material. This new proof yields as a by-product the following new characterization of C*-simplicity:

\begin{thm}\label{thm:weakly-contained}
A discrete group $G$ is C*-simple if and only if for every amenable subgroup $H \leq G$, the quasi-regular representation $\lambda_{G/H}$ is weakly equivalent to the left regular representation $\lambda_G$.
\end{thm}

Recall (see e.g. \cite{Di1977}*{Section 3.4.4} or \cite{D2007}*{Section 7}) that a discrete group $G$ is C*-simple if and only if every unitary representation that is weakly contained in $\lambda_G$ is weakly equivalent to $\lambda_G$. The above result says that it is enough to consider quasi-regular representations with respect to amenable subgroups.

In turn, these new methods allow us to completely settle the problem of characterizing groups with the unique trace property.

\begin{thm} \label{thm:unique-trace-property}
A discrete group has the unique trace property if and only if its amenable radical is trivial. In particular, every C*-simple group has the unique trace property.
\end{thm}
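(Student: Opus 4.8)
The plan is to prove the stated equivalence directly and read off the final assertion from it: a C*-simple group has trivial amenable radical (as recalled above), so once the equivalence is in hand such a group automatically has the unique trace property. Throughout, write $\tau_0$ for the canonical trace, determined by $\tau_0(\lambda_g)=0$ for $g\neq e$, where $\lambda$ is the left regular representation.

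For the implication that the unique trace property forces the amenable radical $R(G)$ to be trivial, I would argue by contraposition. Suppose $N:=R(G)\neq\{e\}$. The indicator $\mathbf 1_N$ is a positive-definite class function on $G$: it is the diagonal matrix coefficient $\langle\lambda_{G/N}(g)\delta_{eN},\delta_{eN}\rangle$ of the quasi-regular representation on $\ell^2(G/N)$, and its invariance under conjugation is immediate from normality of $N$. Hence $\mathbf 1_N$ defines a tracial state on the group algebra $\bC[G]$. To see that it extends to a trace $\tau_N$ on $\ca_r(G)$ I would use amenability of $N$: then $\mathbf 1\prec\lambda_N$, so by continuity of induction $\lambda_{G/N}=\operatorname{Ind}_N^G\mathbf 1\prec\operatorname{Ind}_N^G\lambda_N=\lambda_G$, which is exactly the statement that $\mathbf 1_N$ is $\|\cdot\|_{\ca_r}$-bounded. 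Since $N\neq\{e\}$ we get $\tau_N(\lambda_s)=1\neq 0=\tau_0(\lambda_s)$ for $s\in N\setminus\{e\}$, so $\tau_N\neq\tau_0$ and the unique trace property fails.

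For the converse, assume $R(G)=\{e\}$ and let $\tau$ be an arbitrary trace on $\ca_r(G)$; the goal is $\tau=\tau_0$. Traciality gives, for any $s\in G$ and any $g_1,\dots,g_n\in G$,
\[
\tau(\lambda_s)=\frac1n\sum_{i=1}^n\tau(\lambda_{g_i}\lambda_s\lambda_{g_i}^*)=\tau\Big(\frac1n\sum_{i=1}^n\lambda_{g_i s g_i^{-1}}\Big),
\]
so that $|\tau(\lambda_s)|\le\big\|\frac1n\sum_{i=1}^n\lambda_{g_i s g_i^{-1}}\big\|$ since $\tau$ is a state. It therefore suffices to prove the averaging estimate that for every $s\neq e$ this operator norm can be made arbitrarily small over choices of the $g_i$; this forces $\tau(\lambda_s)=0$ for all $s\neq e$, i.e. $\tau=\tau_0$.

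The engine for the estimate is the Furstenberg boundary: $G\acts\fb$ is minimal and strongly proximal. Two dynamical facts link this action to $R(G)$. First, every amenable normal subgroup $M$ acts trivially on $\fb$: amenability gives an $M$-fixed $\nu\in\operatorname{Prob}(\fb)$, strong proximality pushes $\nu$ along a net $(g_i)$ to a point mass $\delta_x$, and normality keeps each $g_i\nu$ invariant under $M$, so $x$ is $M$-fixed; the $M$-fixed set is nonempty and $G$-invariant, hence all of $\fb$ by minimality. Thus $R(G)=\{e\}$ says precisely that nontrivial elements act with genuine boundary dynamics. Second, and this is where the \emph{main obstacle} lies, one must upgrade ``$s\neq e$ moves on $\fb$'' to operator-norm decay of averaged conjugates, by a Powers-type construction: strong proximality is used to select the $g_i$ so that the conjugates $g_i s g_i^{-1}$ carry a prescribed finite part of $\ell^2(G)$ into almost disjoint regions governed by shrinking boundary neighbourhoods, forcing $\big\|\frac1n\sum_i\lambda_{g_i s g_i^{-1}}\big\|\to 0$. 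The delicate point is that I only assume $R(G)=\{e\}$, which is strictly weaker than C*-simplicity, i.e. than topological freeness of $G\acts\fb$ in Theorem~\ref{thm:c-star-simplicity}; so I cannot invoke a global Powers property and must instead extract the spreading of a \emph{single} element $s\notin R(G)$ directly from the boundary dynamics, running the averaging element by element. Granting this, $\tau=\tau_0$, so every group with trivial amenable radical has the unique trace property, and the concluding ``in particular'' follows since C*-simple groups have trivial amenable radical.
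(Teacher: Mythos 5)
The easy direction of your argument (a nontrivial amenable radical produces a second trace) is correct and is essentially the paper's: the class function $\mathbf 1_{R_a(G)}$ defines a tracial state on $\ca_r(G)$ because $\lambda_{G/R_a(G)}\prec\lambda_G$ by amenability of $R_a(G)$ and continuity of induction, and it differs from the canonical trace on any $s\in R_a(G)\setminus\{e\}$. The paper phrases the same construction as composing a non-canonical trace on $\ca_r(R_a(G))$ with the conditional expectation $E_{R_a(G)}$.

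The forward direction, however, has a genuine gap, and it sits exactly where you flag the ``main obstacle.'' You reduce everything to the claim that for each $s\neq e$ the norm $\bigl\|\frac1n\sum_{i}\lambda_{g_i s g_i^{-1}}\bigr\|$ can be made arbitrarily small for suitable $g_i$, and then write ``granting this.'' That claim is the entire content of the theorem: it is a single-element Powers/Dixmier averaging property, and although it is in fact true for groups with trivial amenable radical, it does not follow from the boundary dynamics by any routine argument. The specific difficulty is that $\fb$ is an abstract compact $G$-space with no given quasi-invariant measure or combinatorial link to $\ell^2(G)$, so the topological fact that $s$ moves a clopen set $U$ off itself does not translate into operator-norm estimates for averaged conjugates in $\B(\ell^2(G))$; the classical Powers argument needs ping-pong on subsets of $G$ itself, which is unavailable here. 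Moreover, the \emph{global} Powers averaging property is equivalent to C*-simplicity, which is strictly stronger than triviality of the amenable radical, so no global argument can work and the element-by-element version you would need is genuinely delicate. The paper avoids norm estimates entirely: a trace $\tau$ is equivariant for the conjugation action, so by $G$-injectivity of $C(\fb)$ it extends to a $G$-equivariant unital completely positive map $\Phi:\rcp{C(\fb)}{G}\to C(\fb)$; the rigidity result \cite{KK2014}*{Theorem 3.12} forces $\Phi$ to restrict to the identity on $C(\fb)$, hence to be a $C(\fb)$-bimodule map, and then $1_U\,\tau(\lambda_s)=\Phi(1_U 1_{sU}\lambda_s)=0$ for a clopen $U$ with $U\cap sU=\emptyset$ gives $\tau(\lambda_s)=0$ for every $s\notin R_a(G)$. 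You would need either to adopt this injectivity/rigidity mechanism or to supply an actual proof of your averaging estimate; as written, the hard implication is assumed rather than proved.
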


The proofs of these results occupy the first three sections of this paper, and are remarkably short and self-contained. The last result is also used (in combination with an observation from \cite{T2012}*{Theorem 5.14}) to obtain another proof of the fact, recently proved in \cite{BDL2014}, that amenable invariant random subgroups of a discrete group concentrate on the amenable radical.

The remainder of the paper is devoted to a thorough investigation of C*-simplicity. First we show that C*-simplicity is stable under group extensions, answering a question of de la Harpe and Pr\'eaux \cite{DP2011}*{Section 2, Question (Q)}.

\begin{thm}\label{thm:c-star-simplicity-closed-wrt-extensions}
Let $G$ be a discrete group and let $N \le G$ be a normal subgroup. Then $G$ is C*-simple if and only if both $N$ and $C_G(N)$ are C*-simple, where $C_G(N)$ denotes the centralizer of $N$ in $G$. In particular, C*-simplicity is closed under extension.
\end{thm}

We then introduce a convenient algebraic condition, the absence of amenable \emph{normalish subgroups}, that implies C*-simplicity (see Theorem \ref{thm:not-c-star-simple-implies-normalish}). This criterion is easy to work with, and allows us to give a short proof of the C*-simplicity of a large class of groups that encompasses virtually all previously known examples (e.g. \cites{BCH1994, P2008, OO2014}) as well as many new ones. In particular, we prove:

\begin{thm}\label{thm:betti}
A discrete group with trivial amenable radical having either non-trivial bounded cohomology or non-vanishing $\ell^2$-Betti numbers is C*-simple.
\end{thm}

We also recover the following result from \cite{P2008}. Recall that a linear group is a subgroup of $\mathrm{GL}_d(K)$ for some field $K$.

\begin{thm}\label{thm:lineargroups}
A linear group is C*-simple if and only if its amenable radical is trivial.
\end{thm}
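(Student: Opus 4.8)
The forward implication is the well-known fact recalled above: C*-simplicity forces the amenable radical to vanish. For the converse I would invoke Theorem~\ref{thm:c-star-simplicity}, so that it suffices to produce a single topologically free boundary action of a linear group $G$ with trivial amenable radical, and I would build one on a flag variety attached to the Zariski closure of $G$. After the standard reductions (replace the ground field by the subfield $k_0$ generated by the matrix entries, and reduce to a countable, essentially finitely generated, situation, which I take for granted) one may embed $k_0$ into a local field $k$: in characteristic zero this is an embedding into $\bC$, while in positive characteristic one passes through finitely generated fields, which embed into local fields. So assume $G \le GL_n(k)$ with $k$ local.

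Next I would replace $G$ by a Zariski-dense subgroup of a semisimple group. Let $\mathbf{G}$ be the Zariski closure of $G$ and $\mathbf{R}$ its solvable radical (passing to the identity component, a detail I suppress). Since $\mathbf{R}$ is normal in $\mathbf{G}$ and $G$ is Zariski dense, $G \cap \mathbf{R}(k)$ is a solvable, hence amenable, normal subgroup of $G$, so it is trivial by hypothesis. Thus $G$ embeds as a Zariski-dense subgroup of $\mathbf{H}(k)$, where $\mathbf{H} = \mathbf{G}/\mathbf{R}$ is semisimple. Taking a minimal parabolic $k$-subgroup $\mathbf{P} \le \mathbf{H}$, I would consider the action $G \acts \mathbf{H}(k)/\mathbf{P}(k)$ on the compact flag variety. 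That this is minimal and strongly proximal, i.e.\ a boundary action, follows from the Zariski density of $G$ together with standard boundary theory for semisimple groups over local fields (Furstenberg, Guivarc'h--Raugi, Benoist--Quint), provided $\mathbf{H}$ has no anisotropic factors.

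Topological freeness is where the amenable-radical hypothesis re-enters. If $g \in G$ fixes a nonempty open subset of $\mathbf{H}(k)/\mathbf{P}(k)$, then, the fixed locus of $g$ being Zariski closed while a nonempty analytic-open subset of the $k$-points of the irreducible flag variety is Zariski dense, $g$ fixes the entire flag variety. Hence $g$ lies in the kernel of the action, which is $\bigcap_{h \in \mathbf{H}} h\mathbf{P}h^{-1} = Z(\mathbf{H})$; thus $G \cap Z(\mathbf{H})(k)$ is an abelian, hence amenable, normal subgroup of $G$, and is therefore trivial. So no nontrivial element fixes an open set, the action is topologically free, and C*-simplicity follows from Theorem~\ref{thm:c-star-simplicity}.

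The main obstacle is the hypothesis that $\mathbf{H}$ has no $k$-anisotropic factors: a compact (anisotropic) simple factor contributes nothing to the flag variety, so an element projecting nontrivially to such a factor could act trivially on the boundary and destroy topological freeness. In characteristic zero this never arises, since over $\bC$ every positive-dimensional semisimple group is isotropic and one takes the complex flag manifold. In positive characteristic I would resolve it by choosing finitely many places so that each simple factor of $\mathbf{H}$ becomes isotropic over at least one completion $k_i$, and then replacing the single flag variety by the product $\prod_i \mathbf{H}(k_i)/\mathbf{P}_i(k_i)$; a product of boundary actions is again a boundary action, and its kernel is again central, so the same argument applies. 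Arranging such a family of embeddings, together with the preliminary reduction to a finitely generated field guaranteeing that these places exist, is the technical heart of the argument.
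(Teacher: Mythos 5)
Your strategy here is the classical one of Bekka--Cowling--de la Harpe and Poznansky --- construct an explicit topologically free boundary on a flag variety --- and it contains a genuine gap at its central step: Zariski density of $G$ in the semisimple group $\mathbf{H}$ does \emph{not} imply that $G \acts \mathbf{H}(k)/\mathbf{P}(k)$ is a boundary action. Strong proximality forces $G$ to contain elements acting proximally on the flag variety, hence in particular forces $G$ to be unbounded in (every simple factor of) $\mathbf{H}(k)$, and a Zariski-dense subgroup can perfectly well be precompact. Concretely, take a free subgroup $G$ of $SU(2)$ that is dense in the Lie topology: it has trivial amenable radical and is Zariski dense in $\mathbf{H}=SL_2(\bC)$, yet it acts on the flag variety $\mathbb{P}^1(\bC)\cong S^2$ by isometries preserving the rotation-invariant probability measure, so the action is not proximal for any pair of distinct points. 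Your remark that ``in characteristic zero this never arises'' conflates isotropy of $\mathbf{H}$ over $k$ (a property of the algebraic group) with unboundedness of $G$ itself (a property of the chosen embedding). Repairing this is exactly the technical heart of the Tits/Breuillard--Gelander/Poznansky machinery: one must first reduce to a finitely generated field --- itself a delicate reduction for C*-simplicity, since the amenable radical of a finitely generated subgroup of $G$ need not be trivial even when that of $G$ is --- and then choose the places and valuations so that suitable elements of $G$ acquire eigenvalues of absolute value different from $1$, i.e.\ become proximal, in each factor. Both of these steps are waved through in your write-up, and the second is not even identified as an issue outside the positive-characteristic anisotropic case.

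For comparison, the paper deliberately avoids constructing any explicit boundary, which is the whole point of its ``new proof.'' It shows (Theorem \ref{thm:linear-groups-non-amenability-normalish}) that a linear group with trivial amenable radical has no amenable \emph{normalish} subgroup, using the descending chain condition on the Zariski closures of the intersections $\cap_{t\in F}\, tHt^{-1}$, the Tits alternative, and in positive characteristic a Frobenius/Galois argument; C*-simplicity then follows from the general criterion of Theorem \ref{cor:not-c-star-simple-implies-normalish}, which operates on the universal boundary $\fb$, where point stabilizers are automatically amenable by Lemma \ref{lem:fb-pt-stabilizer-amenable}. That route sidesteps precisely the local-field and proximality issues on which your proposal founders.
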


Using a different argument, we also handle the following class of groups:

\begin{thm}\label{thm:few}
A discrete group with only countably many amenable subgroups is C*-simple if and only if its amenable radical is trivial.
\end{thm}

The last result applies, for example, to torsion and torsion-free Tarski monster groups, as well as to free Burnside groups $B(m,n)$ for $m \geq 2$ and $n$ odd and sufficiently large. Thus we recover a recent result of Osin and Olshanski \cite{OO2014} that yields in particular the existence of C*-simple groups without non-abelian free subgroups (see also \cite{LM2016} for another example).

We emphasize that, prior to our work, essentially the only method available for proving C*-simplicity was the method introduced by Powers in his 1975 paper \cite{P1975}, which consists of proving a certain spectral radius estimate, and which often requires exhibiting ``ping-pong partners'' for certain boundary actions of the group. As the case of linear groups exemplifies, this method is usually difficult to implement in practice. Our \emph{no amenable normalish subgroup} criterion (Theorem \ref{thm:not-c-star-simple-implies-normalish}) leads to a considerably simplified analysis in each example. We note however, that there is no conceptual limitation to Powers' method, since we now know that Powers' criterion is in fact equivalent to C*-simplicity \cites{K2015, H2015}.

Further in the paper, we investigate reduced crossed products. We show that the reduced crossed product over a C*-simple group is simple whenever the underlying C*-algebra has no invariant closed ideals. This answers a question of de la Harpe and Skandalis \cite{DS1986}*{page 242}, and applies in particular to the C*-algebra of commutative functions on a compact space equipped with a minimal group action.

\begin{thm}
Let $G$ be a discrete C*-simple group. For any unital $G$-C*-algebra $A$ having no non-trivial $G$-invariant closed ideal, the reduced crossed product $\rcp{A}{G}$ is simple.
\end{thm}

We also recover and generalize an observation of Haagerup and Olesen \cite{HO2014} relating the amenability of Thompson's group $F$ to the C*-simplicity of Thompson's group $T$.

\begin{prop}\label{prop:amen-stab} If $X$ is a $G$-boundary such that the point stabilizer $G_x$ is amenable for some $x\in X$, then $G$ is C*-simple if and only if $X$ is topologically free.
\end{prop}

We then further study the relation between the C*-simplicity of $G$ and the size of point stabilizers of boundary actions. In particular, we show that if the stabilizer of some point on a $G$-boundary is C*-simple, then $G$ is C*-simple as well (see Proposition \ref{prop:G-not-c-star-simple-implies-G-x-not-c-star-simple}).

Finally, in the last section, we introduce a new operator-algebraic property of discrete groups that we call the Connes-Sullivan property, or property (CS) for short. It is a stronger property than C*-simplicity, in the sense that every group with property (CS) and trivial amenable radical is C*-simple.

\begin{defn}
A discrete group $G$ is said to have the {\em Connes-Sullivan property}, or property (CS) for short, if for every unitary representation $\pi : G \to B(\cH)$ weakly contained in the left regular representation of $G$ there exists a neighborhood $U$ of the identity in $B(\cH)$ such that $\pi^{-1}(U)$ belongs to the amenable radical $R_a(G)$ of $G$.
\end{defn}

We establish the following result:

\begin{thm}\label{thm:cs}
Let $G$ be a discrete group. If $G$ is linear or if the bounded cohomology of $G$ satisfies $H_b^2(G,\ell^2(G/R_a(G))) \ne 0$, then $G$ has property (CS).
\end{thm}

The Connes-Sullivan conjecture, which was proved by Zimmer \cite{Z1987}, asserts that subgroups of a Lie group that act amenably on it must be dense in a Lie subgroup with solvable identity component. The above theorem turns out to be a broad generalization of Zimmer's theorem, in the sense that Zimmer's theorem follows easily from the fact that linear groups have property (CS).

In addition to the introduction, this paper has seven other sections. In Section \ref{sec:fust} we recall the notion of Furstenberg boundary $\fb$ for a discrete group $G$ and we establish some of its basic topological properties, which we then translate into operator algebraic properties of the C*-algebra of continuous functions $C(\fb)$.  In Section \ref{sec:kk} we give a self-contained proof of Theorem \ref{thm:c-star-simplicity} and establish Theorem \ref{thm:weakly-contained}.  Section \ref{sec:uniqueness-of-trace} is devoted to the unique trace property and Section \ref{stability} to Theorem  \ref{thm:c-star-simplicity-closed-wrt-extensions}. Normalish subgroups are introduced in Section \ref{sec:examples}, where Theorems \ref{thm:betti}, \ref{thm:lineargroups} and \ref{thm:few} are proven. Finally,  Section \ref{sec:cross} is devoted to reduced cross-products, while the last section deals with property (CS) and Theorem \ref{thm:cs}.

\subsection*{New developments}

Several new developments have occurred since the first draft of this paper appeared in October 2014. First, in a dramatic turn of events, Le Boudec \cite{L2015 } exhibited the first examples of groups that have trivial amenable radical (and hence unique trace by Theorem \ref{thm:unique-trace-property}) but are not C*-simple. The existence of such an example, whose proof utilizes Proposition \ref{prop:amen-stab} to disprove C*-simplicity, has put an end to a longstanding question and a posteriori justifies the relevance of the various sufficient criterions for C*-simplicity expounded in the present paper and elsewhere.

Subsequently, Kennedy \cite{K2015} and Haagerup in a posthumous preprint \cite{H2015} independently discovered a new characterization of C*-simplicity. They showed that a group is C*-simple if and only if its reduced C*-algebra satisfies an averaging property similar to the one used by Powers in \cite{P1975}. Additionally, in \cite{K2015} a group is shown to be C*-simple if and only if it has no non-trivial amenable uniformly recurrent subgroups in the sense of Glasner and Weiss. This latter criterion has been recently used by Le Boudec and Matte Bon \cite{LM2016} to study the C*-simplicity of various groups of homeomorphisms and to show that Thompson's group $V$ is C*-simple, while the C*-simplicity of $T$ is equivalent to the non-amenability of $F$.

Bryder and Kennedy \cite{BK2016} studied the ideal structure of (twisted) crossed products over C*-simple groups. In particular, they established a bijective correspondence between maximal ideals of the reduced crossed product and maximal invariant ideals of the underlying C*-algebra. Finally, recent work of Raum explores the C*-simplicity of non-discrete groups \cites{R2015a,R2015b}, and very recent work of Ivanov and Omland \cite{IO2016} provides further examples of non-C*-simple groups, among other things.

\section{The Furstenberg boundary}\label{sec:fust}

In this section we recall the notion of the Furstenberg boundary of a discrete group. We also give a new direct proof of two important properties: its extremal disconnectedness and the amenability of point stabilizers, which were first proved in \cite{KK2014} using operator-algebraic techniques.

\subsection{Definitions}Let $G$ be a discrete group and $X$ a compact\footnote{In this paper, compact spaces are assumed to be Hausdorff.} topological space. A $G$-action on $X$ is a group homomorphism from $G$ to the group of homeomorphisms of $X$. A $G$-map between two compact $G$-spaces is a continuous $G$-equivariant map. The $G$-action on $X$ is said to be \emph{minimal} if the $G$-orbit $Gx$ is dense for every $x\in X$. It is said to be \emph{proximal} if for every pair $x,y\in X$ there is a net $t_i \in G$ such that $\lim t_i x=\lim t_i y$. It is said to be \emph{strongly proximal} if the induced $G$-action on the space $\mathcal{P}(X)$ of probability measures on $X$, is proximal. Furstenberg \cite{F1973} (see also \cite{G1976}) introduced the following notion.

\begin{defn}[$G$-boundary] A compact Hausdorff $G$-space $X$ is called a \emph{$G$-boundary} if it is minimal and strongly proximal.
\end{defn}

Boundary actions arise in many natural geometric contexts, beginning with the classical example of non-elementary discrete subgroups of $\PGL_2(\IR)$ acting on the projective line. But, as Furstenberg observed, they arise whenever one has an affine $G$-action on a compact convex space.

\begin{prop}[\cite{G1976}*{Theorem III.2.3}]\label{furst1} Suppose $G$ acts by affine maps on a locally convex topological vector space, and let $K$ be a $G$-invariant compact convex subset. Then $K$ contains a $G$-boundary $X$. It is unique if $K$ is irreducible. Conversely, if $K$ is the closed convex hull of $X$, then $K$ is irreducible.
\end{prop}

Irreducibility for $K$ means that there is no proper $G$-invariant compact convex subspace. The converse part is not stated explicitly in \cite{G1976}, but is clear from the proof. In fact, if $K$ is irreducible, then $X$ is the closure of the extreme points of $K$.

Every $G$-boundary $X$ arises in this way, because $X$ can be identified with the point masses in the space $\mathcal{P}(X)$ of probability measures on $X$. We will always view $X$ as a subspace of $\mathcal{P}(X)$ in this way.

Furstenberg showed \cite{F1973} that every group $G$ admits a universal boundary $\fb$, i.e. a boundary such that every $G$-boundary is a continuous $G$-equivariant image of $\fb$. Moreover he showed that it is unique up to $G$-equivariant homeomorphism. It is now called \emph{the Furstenberg boundary} $\fb$ of $G$. The uniqueness is a consequence of the following important property:

\begin{prop}[\cite{F1973}*{Proposition 4.2}]\label{furst2} Every $G$-map from a compact $G$-space $Y$ into $\mathcal{P}(X)$, where $X$ is a $G$-boundary, must contain $X$ in its range. If $Y$ is minimal, then there is at most one such map.
\end{prop}

\begin{proof} We may clearly assume that $Y$ is minimal for the first assertion. Then it is enough to show that two such maps will take the same value at some point of $Y$, and that this value is a point mass. If $\phi_1,\phi_2: Y \to X$ are $G$-maps, then $\mu_y:=\frac{1}{2}(\delta_{\phi_1(y)} + \delta_{\phi_2(y)})$ is a probability measure on $X$. Strong proximality implies that the $G$-orbit of $\mu_y$ contains a point mass in its closure, which ends the proof.
\end{proof}

In particular, the only $G$-map between $X$ and $\mathcal{P}(X)$ is the identity map $x \mapsto \delta_x$. An important immediate consequence of Proposition \ref{furst1} and the universal property of $\fb$ is the existence of \emph{boundary maps}. Namely given any compact $G$-space $X$, there exists a boundary map, i.e. a $G$-equivariant continuous map
\begin{equation}\label{bdy-map} b: \fb \to \mathcal{P}(X).\end{equation}

\subsection{Extremal disconnectedness}
Recall that a topological space is called \emph{extremally disconnected} (or Stonean) if the closure of every open set is open.

Gleason \cite{gleason} proved that a compact space $X$ is extremally disconnected if and only it is a projective object in the category of compact spaces with continuous maps as morphisms. This means that given $Y,Z$ compact spaces and continuous maps $a:X \to Y$ and $p:Z \twoheadrightarrow Y$ with $p$ surjective, there is a map $c:X \to Z$ with $a=p \circ c$.

\begin{prop}\label{stonean}The Furstenberg boundary $\fb$ of a discrete group $G$ is extremally disconnected.
\end{prop}

\begin{proof} We adapt an argument of Gleason \cite{gleason}*{Theorem 1.2}. Let $U$ be an open subset of $\fb$ and let $Y$ be the compact subset of $\fb \times \{0,1\}$ obtained by taking the disjoint union of $\overline{U} \times \{0\}$ and $U^c \times \{1\}$. Pick $x_0 \in \fb$ and define $\phi:G \to Y$ by $\phi(g)= (gx_0,0)$ if $gx_0 \in U$ and $\phi(g)=(gx_0,1)$ otherwise. By the universal property of the Stone-Cech compactification $\beta G$ of $G$, $\phi$ extends to a continuous map from $\beta G$ to $Y$ that we continue to denote by $\phi$. By $(\ref{bdy-map})$ there is a boundary map $b: \fb \to \mathcal{P}(\beta G)$. For $x \in \fb$, let $\mu_x=\phi_* \circ b(x) \in \mathcal{P}(Y)$. By Proposition  \ref{furst2}, the push-forward of $\mu_x$ on $\fb$ via the projection onto the first factor is the point mass $\delta_x$. In other words, $\mu_x$ is supported on $\{x\} \times \{0,1\}$. Hence $\mu_x(U^c \times \{1\}) = 0$ if $x \in U$, and $\mu_x(U^c \times \{1\}) = 1$ if $x \notin \overline{U}$. Since the map $x \mapsto \mu_x(U^c \times \{1\})$ is continuous, this implies that $\overline{U}$ is open.
\end{proof}

This proposition was first observed in \cite{KK2014}*{Remark 3.16} by other means. Applying Gleason's theorem \cite{gleason}, we conclude that $\fb$ is projective in the category of compact spaces with continuous maps. We now prove a $G$-equivariant analogue of this property, which also follows easily from Furstenberg's Propositions \ref{furst1} and \ref{furst2}.

\begin{prop}\label{lift} Let $K$ and $K'$ be compact convex $G$-spaces and let $p:K' \twoheadrightarrow K$ be a surjective affine $G$-map. Then any $G$-map $a: \fb \to K$ lifts to a $G$-map $c: \fb \to K'$, i.e. $a=p \circ c$.
\end{prop}

\begin{proof} Note that $a(\fb)$ is a $G$-boundary. Its convex hull $C$ is a compact convex $G$-invariant subspace of $K$, and hence so is $p^{-1}(C)$. Thus by Proposition \ref{furst1}, $p^{-1}(C)$ contains a $G$-boundary $X$, $C$ is irreducible and $a(\fb)$ is the unique $G$-boundary in $C$. It follows that $p(X)=a(\fb)$. By the universal property of $\fb$, there is a $G$-map $c:\fb \to X$. Proposition \ref{furst2} implies that $a$ and $p\circ c$ coincide, as desired.
\end{proof}

\subsection{Amenability of stabilizers}
A point $y \in \fb$ gives rise to an injective $G$-equivariant unital positive linear map $\sigma_{y}: C(\fb) \hookrightarrow \ell^\infty(G)$ defined by $\sigma_y(f)(g) = f(gy)$ for $f \in C(\fb)$ and $g \in G$. The positivity of $\sigma_y$ means that it sends non-negative functions to non-negative functions.

Dual to the existence of the boundary map $(\ref{bdy-map})$ is the existence of a $G$-equivariant unital positive linear map $\beta : C(X) \to C(\fb)$ defined by $\beta(f)(y) = \int_X f\, db(y)$ for $f \in C(X)$ and $y \in \fb$. Together with Proposition \ref{furst2}, this yields the following result.

\begin{lem}
There is a $G$-equivariant unital positive retraction $r:\ell^\infty(G) \to C(\fb)$, i.e. $r \circ \sigma_y = id_{|C(\fb)}$ for every $y \in \fb$.
\end{lem}

\begin{proof} Note that $\ell^\infty(G)=C(\beta G)$, where $\beta G$ is the Stone-Cech compactification of $G$. The dual $\beta : \fb \to \mathcal{P}(\beta G)$ of the boundary map yields the desired retraction.
\end{proof}

We obtain the following result as a direct consequence.

\begin{prop}\label{amen} For every $x \in \fb$, the point stabilizer $G_x=\{g \in G : gx=x\}$ is amenable.
\end{prop}

\begin{proof} Let $e_x:C(\fb) \to \IC$ denote the evaluation map at $x$. Since $G$ is discrete, $\ell^\infty(G_x)$ embeds naturally into $\ell^\infty(G)$. Given the retraction $r$ from the previous lemma, the composition $e_x \circ r$ is a $G_x$-invariant unital positive linear functional on $\ell^{\infty}(G_x)$, as desired.
\end{proof}

This also yields a quick proof, for discrete groups, of the following result of Furman \cite{F2003}. Recall that the amenable radical $R_a(G)$, introduced by Day, \cite{D1957} is the largest amenable normal subgroup of $G$.

\begin{prop}\label{furman} The kernel of the action of $G$ on $\fb$ coincides with the amenable radical of $G$.
\end{prop}
\begin{proof} Being amenable, $R_a(G)$ fixes a probability measure $\mu$ on $\fb$. Also, being normal it fixes $g\mu$ for every $g \in G$. Since $\fb$ is strongly proximal, $R_a(G)$ must fix a point, and hence must fix every point by the minimality of $\fb$. The converse follows immediately from Proposition \ref{amen}.
\end{proof}

Consequently we see that $\fb$ is reduced to a point if and only if $G$ is amenable \cite{G1976}, and that $\partial_F( G/R_a(G)) = \fb$. Also, it is easy to see that every strongly proximal compact $G$-space which is not reduced to a point, has no isolated points. In particular, this is the case for $\fb$. Finally, unless reduced to a point, extremally disconnected spaces are not second countable \cite{gleason}*{Theorem 1.3}. In particular, if $G$ is non-amenable, then $\fb$ is not metrizable and hence $C(\fb)$ is not separable.

\subsection{Injectivity, rigidity and essentiality}
The two important properties of $\fb$ proved by Furstenberg and recalled in Proposition \ref{furst1} and Proposition \ref{furst2} dualize to yield crucial operator-algebraic properties of the commutative C*-algebra $C(\fb)$.

Recall that a $G$-C*-algebra $A$ is a unital C*-algebra endowed with a $G$-action by automorphisms. A state of $A$ is a unital positive functional, and the state space $S(A)$ is the compact convex space consisting of all states on $A$. A linear map between C*-algebras is positive if it sends positive elements to positive elements, and it is unital if it sends the unit to the unit.

If $X$ and $Y$ are compact spaces, then there is a correspondence between unital positive linear maps $\phi : C(X) \to C(Y)$ and continuous maps $\widetilde{\phi} : Y \to \mathcal{P}(X)$ given by the rule
\[
\phi(f)(y) = \int_X f d\widetilde{\phi}(y), \quad f \in C(X),\ y \in Y.
\]
It is clear that this correspondence is $G$-equivariant if $X$ and $Y$ are $G$-spaces. It is also clear that the map $\phi$ is an isometric embedding if $\widetilde{\phi}$ contains all the point masses in its image. The situation is analogous if $C(Y)$ is replaced by an arbitrary $G$-C*-algebra $A$, and $Y$ is replaced by the state space $S(A)$ of $A$. Consequently, the following result is an immediate translation of Proposition \ref{furst2}.

\begin{prop}[$G$-rigidity and $G$-essentiality]\label{rigidity} The identity map is the only $G$-equivariant unital positive linear map from $C(\fb)$ to itself. Moreover every $G$-equivariant unital positive linear map from $C(\fb)$ to a unital $G$-C*-algebra is an isometric embedding.
\end{prop}

Note that this isometric embedding need not be an algebra homomorphism. This proposition and the next were first obtained in \cite{KK2014} as consequences of Hamana's theory of injective envelopes of C*-algebras. In this language, the first assertion above means that $C(\fb)$ is a $G$-rigid extension of $\bC$, while the second means that $C(\fb)$ is a $G$-essential extension of $\bC$. One of the key findings of that paper was the realization that the Hamana boundary of a discrete group is the same thing as the Furstenberg boundary. In the current text, we reverse this point of view and start with the topological properties of the Furstenberg boundary in order to deduce the desired operator algebraic results.

Along the same lines, Proposition \ref{lift} immediately yields the following result.

\begin{prop}[$G$-injectivity]\label{injectivity} The algebra $C(\fb)$ is $G$-injective. Namely, given two unital $G$-C*-algebras $A \subset B$, any $G$-equivariant unital positive map $A \to C(\fb)$ lifts to a $G$-equivariant unital positive  map $B \to C(\fb)$.
\end{prop}

\begin{proof} Apply Proposition \ref{lift} to the state spaces $K=S(A)$ and $K'=S(B)$.
\end{proof}

\begin{rem}\label{op-sys} Proposition \ref{injectivity} also holds (with the same proof) assuming only that $A$ and $B$ are $G$-operator systems, i.e. self-adjoint subspaces of a unital C*-algebra containing the unit and endowed with a $G$-action by unital complete order isomorphisms, i.e. maps that preserve the matrix order structure (see \cite{KK2014}).
\end{rem}

\section{Two characterizations of C*-simplicity}\label{sec:kk}
In this section, we establish two characterizations of C*-simplicity. The first (Theorem \ref{thm:c-star-simplicity-2}) is due to Kalantar and Kennedy \cite{KK2014}. Our goal here will be to give a new, entirely self-contained proof. As a payoff, this will yield a second characterization (Theorem \ref{newchar} below), that is new. Recall that a compact $G$-space $X$ is called \emph{topologically free}, if the set of fixed points of each non trivial element $g \in G$ has empty interior.

\begin{thm} \label{thm:c-star-simplicity-2}
Let $G$ be a discrete group. The following are equivalent:
\begin{enumerate}
\item $G$ is C*-simple,
\item  $G$  acts freely on its Furstenberg boundary $\fb$,
\item there exists a topologically free $G$-boundary.
\end{enumerate}
\end{thm}

The Furstenberg boundary is not easy to describe concretely. On the other hand, $G$-boundaries often appear naturally in geometry. So in practice, (3) is typically used to prove C*-simplicity.

We split the proof of this theorem into three independent parts.

\subsection{Proof of Theorem \ref{thm:c-star-simplicity-2} ((2) $\Leftrightarrow$ (3))}

Since (2) implies (3) by definition, we need to show that if $G$ admits a topologically free boundary $X$, then it acts freely on $\fb$. Lemma \ref{inte} below implies that $G$ acts topologically freely on $\fb$, and Lemma \ref{topfree} that it acts freely.

\begin{lem}\label{inte} A $G$-map $\pi:Y \to X$ between two minimal compact $G$-spaces sends open sets to sets of non-empty interior.
\end{lem}

\begin{proof} If $U$ is open in $Y$, the minimality of $Y$ implies that it is covered by all translates of $U$, and hence by finitely many since $Y$ is also compact. The image of these translates under $\pi$ covers $X$, hence they have non-empty interior.
\end{proof}

\begin{lem}\label{topfree} The $G$-action on $\fb$ is free if and only if it is topologically free.
\end{lem}

\begin{proof} Recall that $\fb$ is extremally disconnected (Proposition \ref{stonean}). But the fixed point set of any homeomorphism of an extremally disconnected space is open (see \cite{F1971} or \cite{P2012}*{Proposition 2.7}).
\end{proof}

\subsection{Proof of Theorem \ref{thm:c-star-simplicity-2} ((1) $\Rightarrow$ (3))}  The proof will follow easily from the following proposition.

\begin{prop} \label{prop:not-weakly-contained-quasi-regular}
Let $G$ be a non-trivial discrete group and let $X$ be a $G$-boundary. Suppose there is $s \in G \setminus \{e\}$ such that the set $X_s = \{x \in X \mid sx = x\}$ of fixed points of $s$ has non-empty interior. Then the left regular representation $\lambda_G$ is not weakly contained in the quasi-regular representation $\lambda_{G/G_x}$ corresponding to $G_x$.
\end{prop}

\begin{rem}\label{weak-cont-def} Recall that a discrete group $G$ is C*-simple if and only if the following property holds: Every unitary representation of $G$ which is weakly contained in the regular representation of $G$ is weakly equivalent to it (for a proof see \cite{Di1977}*{Section 3.4.4} or \cite{D2007}*{Section 7}).
\end{rem}

\begin{proof}[Proof of (1) $\Rightarrow$ (3) in Thm. \ref{thm:c-star-simplicity-2}] Apply the proposition to $X=\fb$. If the action is not topologically free, then $\lambda_G$ is not weakly contained in $\lambda_{G/G_x}$. However,  $\lambda_{G/G_x}$ is weakly contained in $\lambda_{G}$, because $G_x$ is amenable by Proposition \ref{amen}. Hence $G$ is not C*-simple.\end{proof}

We now focus on the proof of Proposition \ref{prop:not-weakly-contained-quasi-regular}. We require the following lemma.

\begin{lem} \label{lem:not-weakly-contained-1}
Let $G$ be a non-trivial discrete group and let $X$ be a $G$-boundary. For every non-empty open subset $U \subset X$ and $\varepsilon > 0$, there is a finite subset $F \subset G \setminus \{e\}$ such that for every probability measure $\mu$ on $\fb$, there is $t \in F$ satisfying $\mu(t U) > 1 - \varepsilon$.
\end{lem}

\begin{proof}
Fix $x \in U$. For each probability measure $\mu$ on $X$, strong proximality and minimality implies there is $t_\mu \in G \setminus \{e\}$ such that
\[
1 - \mu(t_\mu U) = (\delta_x - t_\mu^{-1} \mu)(U) < \varepsilon.
\]
By continuity there is an open neighborhood $V_\mu$ of $\mu$ in the compact space $\P(X)$ of probability measures on $X$ such that $1 - \nu(t_\mu U) < \varepsilon$ for every $\nu \in V_\mu$. The open sets $V_\mu$ cover $\P(X)$, so by compactness there is a finite subcover $V_{\mu_1},\ldots,V_{\mu_n}$, and we may take $F = \{t_{\mu_1},\ldots,t_{\mu_n}\}$.
\end{proof}

\begin{proof}[Proof of Proposition \ref{prop:not-weakly-contained-quasi-regular}]
If $\xi= \delta_e \in \ell^2(G)$ denotes the point mass at $e$, then the matrix coefficient $\langle \lambda_{G}(g)\xi, \xi \rangle$ is zero if $g \neq e$. We are going to show that it cannot be approximated by convex combinations of matrix coefficients of $\lambda_{G/G_x}$.
To this end, let $U$ be the interior of the fixed point set $X_s$, fix $\eps=1/3$ and let $F \subset G$ be the finite set given by Lemma \ref{lem:not-weakly-contained-1}. Assume, by way of contradiction, that there exist finitely many unit vectors $\xi_1,\ldots,\xi_m$ in $\ell^2(G/G_x)$ such that
\begin{equation}
\label{weak-cont} \left| \langle \lambda_{G}(g)\xi, \xi \rangle - \frac{1}{m} \sum_{j=1}^m \langle \lambda_{G/G_x}(g)\xi_j, \xi_j \rangle \right| < \eps,
\end{equation}
for every $g$ in the finite set $\{tst^{-1} : t \in F\}$.  Define probability measures on $X$ by
\[
\mu:= \frac{1}{m} \sum_{j=1}^m \mu_j, \quad \mu_j:= \sum_{y \in G\cdot x} |\xi_j(y)|^2 \delta_y,
\]
where we have identified the orbit $G\cdot x$ with $G/G_x$. By Lemma \ref{lem:not-weakly-contained-1} there is $t \in F$ such that $\mu(tU^c)<\eps$. Hence
\begin{equation}\label{muest}
\frac{1}{m}\sum_{j=1}^m  \sum_{y\notin U}|\xi_j(ty)|^2 = \frac{1}{m}\sum_{j=1}^m \mu_j(tU^c)= \mu(tU^c) < \eps.
\end{equation}

Denoting $\lambda_{G/G_x}(t^{-1})\xi_j$ by $v_j$ for each $j$, we obtain

$$ \langle \lambda_{G/G_x}(s)v_j, v_j \rangle = \sum_{y \in G\cdot x}  v_j(s^{-1}y)\overline{v_j}(y)
= \sum_{y \in U}  |v_j(y)|^2 + \sum_{y \notin U}v_j(s^{-1}y)\overline{v_j}(y).
$$

Now applying Cauchy-Schwarz and the fact that $U$ is $s$-invariant, we conclude
$$| 1- \langle \lambda_{G/G_x}(tst^{-1})\xi_j, \xi_j \rangle | \le  2 \sum_{y \notin U}  |v_j(y)|^2 = 2\mu_j(tU^c).$$
Averaging over $j$, and using $(\ref{muest})$ gives a contradiction to $(\ref{weak-cont})$ for $g=tst^{-1}$.
\end{proof}

\subsection{Proof of Theorem \ref{thm:c-star-simplicity-2} ((2) $\Rightarrow$ (1))} Since we require the notion of the reduced crossed product of a C*-algebra, we first briefly recall this construction.

Let $A$ be a $G$-C*-algebra, i.e. a unital C*-algebra endowed with an action of $G$ by automorphisms. Let $\pi: A \to B(\mathcal{H})$ be a faithful *-representation of $A$ into the space of bounded operators on a Hilbert space $\mathcal{H}$.

Let $\ell^2(G,\mathcal{H})$ be the Hilbert space of square summable $\mathcal{H}$-valued functions on $G$. The group $G$ acts unitarily on $\ell^2(G,\mathcal{H})$ by left translation,
\[
\lambda_{x} f (g) := f(x^{-1}g), \quad f \in \ell^2(G,\mathcal{H}),\ x,g \in G.
\]
We define a *-representation $\sigma : A \to B(\ell^2(G,\mathcal{H}))$ by
\[
\sigma(a)(f)(g) := \pi(g^{-1} \cdot a)f(g), \quad a \in A,\ f \in \ell^2(G,\mathcal{H}),\ g \in G.
\]

The reduced crossed product C*-algebra $A \rtimes_r G$ is the closure in $B(\ell^2(G,\mathcal{H}))$ of the subalgebra generated by the operators $\sigma(a)$ and $\lambda_x$. Note that $G$ acts isometrically on $A \rtimes_r G$ via conjugation:
\begin{equation}
\label{conj-action} \lambda_x \sigma(a) \lambda_x^* = \sigma(x \cdot a), \quad a \in A,\ x \in G.
\end{equation}
for all $x \in G$ and $a \in A$.

The isomorphism class of $A \rtimes_r G$ does not depend on the representation $\pi$ (see \cite{bo}*{Proposition 4.1.5}). Since the *-representation $\sigma$ is faithful, we will identify $A$ with its image under $\sigma$. For $a \in A$, we will denote $\sigma(a)$ by $a$.

We also recall that a completely positive map $\phi: A \to B$ between two C*-algebras $A$ and $B$ is a linear map such that each of the induced maps $\phi_n: M_n(A)\to M_n(B)$ for $n \in \bN$ is positive. Here, $M_n(A) \simeq M_n \otimes A$ denotes the C*-algebras of $n \times n$ matrices with coefficients in $A$, and $\phi_n \simeq \operatorname{id}_{M_n} \otimes \phi$. Clearly, every $*$-homomorphism between $A$ and $B$ is completely positive. Arveson's Extension Theorem (see \cite{bo}*{Theorem 1.6.1}) states that if $A \subset B$, then every completely positive map $A \to B(\mathcal{H})$ lifts to a completely positive map $B \to B(\mathcal{H})$.

Although we will not use it directly, we also recall Stinespring's fundamental structure theorem for completely positive maps (see \cite{bo}*{Theorem 1.5.3}), according to which every completely positive map $\phi : A \to B(\mathcal{H})$ is of the form $\phi(a)=V^* \pi(a) V$, where $V: \mathcal{H} \to \mathcal{K}$ is a linear map between Hilbert spaces, and $\pi: A \to B(\mathcal{K})$ is a *-representation of $A$.

The reduced crossed product $A \rtimes_r G$ is equipped with a $G$-equivariant unital completely positive map $E: A \rtimes_r G \to A$, the \emph{canonical conditional expectation}. For a finitely supported element $\sum_{x \in G} a_x \lambda_x \in A \rtimes_r G$,
\begin{equation}
\label{can-cond}E \left( \sum_{x \in G} a_x \lambda_x \right) = a_e.
\end{equation}

The commutative C*-algebra $C(X)$ of continuous functions on a compact $G$-space $X$ is a $G$-C*-algebra with respect to the action $(g\cdot f)(x)=f(g^{-1}x)$. When $X$ is a single point, then $C(X) \simeq \bC$, and the resulting reduced crossed product coincides with the reduced C*-algebra $\ca_r(G)$.

Finally, we recall the notion of \emph{multiplicative domain} $D_\phi$ of a completely positive map $\phi: A \to B$. Let
\begin{equation}\label{mult-dom}D_\phi:=\{ a \in A : \phi(a^*a)=\phi(a)^*\phi(a)\ \text{and}\ \phi(aa^*) = \phi(a)\phi(a)^*\}.\end{equation}
Then $\phi(ab)=\phi(a)\phi(b)$ and $\phi(ba)=\phi(b)\phi(a)$ for every $b \in A$ and $a \in D_\phi$, and so $D_\phi$ is a C*-subalgebra (see \cite{bo}*{Proposition 1.5.6}).

\begin{proof}[Proof of Theorem \ref{thm:c-star-simplicity-2} ((2) $\Rightarrow$ (1))] Let $\pi: \ca_r(G) \to B(\mathcal{H})$ be a non-trivial unital *-representation of the reduced C*-algebra of $G$. We need to show that $\pi$ is injective. We view $\ca_r(G)$ as a $G$-subalgebra of the reduced crossed product $\rcp{C(\fb)}{G}$. Since $\pi$ is completely positive we can apply Arveson's Extension Theorem to lift $\pi$ to a unital completely positive map $\phi:\rcp{C(\fb)}{G} \to B(\mathcal{H})$.

Notice that $\ca_r(G)$ belongs to the multiplicative domain (see $(\ref{mult-dom})$) of $\phi$, since $\phi$ extends $\pi$. Hence $\phi(\lambda_y a \lambda_x) = \pi(\lambda_x) \phi(a) \pi(\lambda_y)$ for all $x,y \in G$ and $a \in \rcp{C(\fb)}{G}$, and in particular $\phi$ is $G$-equivariant.

We are going to show that every $G$-equivariant unital completely positive map $\phi$ from $\rcp{C(\fb)}{G}$ to $B(\mathcal{H})$ is faithful, i.e. is nonzero on positive elements. This will clearly imply that $\pi$ is injective, as desired. Proposition \ref{rigidity} implies that the restriction $\phi_{|C(\fb)}$ of $\phi$ to the subalgebra $C(\fb)$ is an isometry onto its image. Applying Proposition \ref{injectivity}  again (along with Remark \ref{op-sys}) to the inverse $(\phi_{|C(\fb)})^{-1}$, we obtain a $G$-equivariant unital positive map $\tau: \operatorname{Im}(\phi) \to C(\fb)$. Then $\psi:=\tau \circ \phi$ is a $G$-equivariant unital completely positive map from $\rcp{C(\fb)}{G}$ to $C(\fb)$.

We claim that $\psi$ is the canonical conditional expectation, i.e. that $\psi(\lambda_s) = 0$ for every $s \in G \setminus \{e\}$. Since the canonical conditional expectation is faithful, this will imply that $\phi$ is faithful, and hence that $\pi$ is faithful. To see the claim, note that $\psi$ is the identity on $C(\fb)$ by Proposition \ref{rigidity}, and thus $C(\fb)$ belongs to the multiplicative domain of $\psi$. In particular, for every $s \in G$ and $f \in C(\fb)$ we have
\begin{equation}\label{trans}  \psi(\lambda_s)f = \psi(\lambda_s f) =\psi((s \cdot f) \lambda_s) =   (s\cdot f) \psi(\lambda_s).\end{equation}
But if the $G$-action on $\fb$ is free, then for $s \in G \setminus \{e\}$ and every $x \in \fb$, there is $f \in C(\fb)$ such that $f(x) \ne 0$ but $(s \cdot f)(x) = f(s^{-1}x)  = 0$. It follows that $\psi(\lambda_s)(x)=0$, and hence $\psi(\lambda_s)=0$, as desired.
\end{proof}

\begin{rem} In Section \ref{sec:cross-products} we will present yet another proof of the implication (2) $\Rightarrow$ (1) of Theorem \ref{thm:c-star-simplicity-2}, which is closer to the original argument in \cite{KK2014}. It runs as follows: by Archbold-Spielberg \cite{AS1994}*{Theorem 1}, if $X$ is a topologically free minimal compact $G$-space, then the reduced crossed product $C(X)\rtimes_r G$ is simple. We will show in Lemma \ref{lem:ideal-gen-nontriv-ideal} that if $X$ is a boundary, even non-topologically free, then this implies that $G$ is C*-simple.
\end{rem}

\subsection{Another characterization of C*-simplicity}
If $G$ is C*-simple, then for every amenable subgroup $H \leq G$, the quasi-regular representation $\lambda_{G/H}$ is weakly equivalent to $\lambda_G$. It turns out that this is actually a characterization of C*-simplicity.

\begin{thm}\label{newchar} A discrete group $G$ is C*-simple if and only if for every amenable subgroup $H \leq G$, the quasi-regular representation $\lambda_{G/H}$ is weakly equivalent to $\lambda_G$.
\end{thm}

\begin{proof} The ``only if'' direction is clear from the definition of C*-simplicity (see Remark \ref{weak-cont-def}), since $\lambda_{G/H}$ is weakly contained in $\lambda_G$ whenever $H$ is amenable. The converse follows from Proposition \ref{prop:not-weakly-contained-quasi-regular}, which we can apply to $X=\fb$ because of Theorem \ref{thm:c-star-simplicity-2}, Lemma \ref{topfree} and the amenability of point stabilizers (Proposition \ref{amen}).
\end{proof}

\section{Uniqueness of the trace and amenable IRS} \label{sec:uniqueness-of-trace}

We recall that a discrete group is said to have the {\em unique trace property} if its reduced C*-algebra has a unique tracial state, i.e. if the canonical trace $\tau_\lambda$ defined by
\[
\tau_\lambda(a) = \langle a \delta_e, \delta_e \rangle, \quad a \in \ca_r(G),
\]
is the only $G$-equivariant state on $\ca_r(G)$.

\begin{thm} \label{thm:unique-trace}
Let $G$ be a discrete group. Then every tracial state $\tau$ on the reduced C*-algebra $\ca_r(G)$ concentrates on the amenable radical $R_a(G)$. That is, $s \notin R_a(G)$ implies that $\tau(\lambda_s)=0$ for every $s \in G$.
\end{thm}

\begin{proof}The proof is similar to the proof of the implication $(2) \Rightarrow (1)$ of Theorem \ref{thm:c-star-simplicity-2}. Let $\tau : \ca_r(G) \to \bC$ be a $G$-equivariant state. Identify $\bC$ with the scalar subalgebra of $C(\fb)$. Then by Theorem \ref{injectivity}, we can extend $\tau$ to a $G$-equivariant unital positive  map $\psi : \rcp{C(\fb)}{G} \to C(\fb)$. Note that $\psi$ is actually completely positive (see \cite{paulsen}*{Theorem 3.9}).

Observe that $\psi(\lambda_s)=\tau(\lambda_s)$ is a constant function on $\fb$ for each $s \in G$. If $s \notin R_a(G)$, then Proposition \ref{furman} shows that it does not act trivially on $\fb$, and hence there is $x \in \fb$ such that $s\cdot x \neq x$. Choosing $f \in C(\fb)$ such that $f(x) \ne 0$ but $f(s^{-1}x)=0$ as before shows that $\psi(\lambda_s)=0$, as desired.
\end{proof}

\begin{rem}
More generally, if $A$ is any $G$-C*-algebra, then it follows as in the proof of Theorem \ref{thm:unique-trace} that every $G$-equivariant positive functional $\phi$ on $\rcp{A}{G}$ concentrates on $\rcp{A}{R_a(G)}$.
\end{rem}

\begin{cor}
A discrete group has the unique trace property if and only if its amenable radical is trivial. In particular, every C*-simple group has the unique trace property.
\end{cor}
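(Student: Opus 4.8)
The plan is to read off both directions of the equivalence directly from Theorem \ref{thm:unique-trace}, whose content is that every tracial state $\tau$ on $\ca_r(G)$ satisfies $\tau = \tau \circ E_{R_a(G)}$. Throughout write $R = R_a(G)$ and let $\tau_\lambda$ denote the canonical trace, so that $\tau_\lambda$ corresponds to $E_{\{e\}}$.

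For the implication that trivial amenable radical forces the unique trace property, I would specialize the theorem to the case $R = \{e\}$. Then $\ca_r(R) = \bC$ and the conditional expectation $E_R$ coincides with $E_{\{e\}}$, which is precisely $a \mapsto \tau_\lambda(a)\,1$. Hence for any tracial state $\tau$ one obtains $\tau = \tau \circ E_{\{e\}} = \tau_\lambda$, so $\tau_\lambda$ is the only trace on $\ca_r(G)$.

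For the converse I would argue the contrapositive: if $R \neq \{e\}$, then I would exhibit a trace distinct from $\tau_\lambda$. Since $R$ is amenable, the trivial character $\epsilon \colon \lambda_r \mapsto 1$ (for $r \in R$) is weakly contained in the regular representation and hence extends to a state on $\ca_r(R)$; being a one-dimensional representation it is in fact a trace, and it is clearly invariant under the conjugation $G$-action. Setting $\tau = \epsilon \circ E_R$ on $\ca_r(G)$, I would verify that $\tau$ is tracial by computing on group elements: $\tau(\lambda_s\lambda_t) = 1$ exactly when $st \in R$, while $\tau(\lambda_t\lambda_s) = 1$ exactly when $ts \in R$, and normality of $R$ gives $st \in R \iff ts \in R$ (since $ts = t(st)t^{-1}$). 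Thus $\tau$ is unchanged under transposition on the dense $*$-subalgebra and extends to a trace on $\ca_r(G)$. Choosing any $r \in R \setminus \{e\}$ yields $\tau(\lambda_r) = 1 \neq 0 = \tau_\lambda(\lambda_r)$, so $\tau \neq \tau_\lambda$ and $G$ fails the unique trace property. The step most worth care is this verification that $\epsilon \circ E_R$ is genuinely a trace and is well defined on $\ca_r(R)$, which is exactly where amenability and normality of $R$ are used.

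Finally, the ``in particular'' clause follows because a C*-simple group has trivial amenable radical, as recalled in the introduction; combined with the equivalence just established, such a group has the unique trace property.
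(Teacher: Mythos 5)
Your proposal is correct and follows essentially the same route as the paper: the forward direction is an immediate specialization of Theorem \ref{thm:unique-trace}, and the converse produces a second trace as $\epsilon \circ E_{R_a(G)}$ using amenability of the radical to extend the trivial character to $\ca_r(R_a(G))$. Your explicit check that $\epsilon$ is conjugation-invariant and that normality gives $st \in R \iff ts \in R$ is a welcome elaboration of a step the paper leaves implicit.
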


\begin{proof}
The ``if'' direction follows immediately from Theorem \ref{thm:unique-trace}. Conversely, if $N$ is a non-trivial amenable normal subgroup of $G$, then any trace $\tau_0$ on the reduced C*-algebra $\ca_r(N)$ gives rise to a trace $\tau$ on the reduced C*-algebra $\ca_r(G)$ via $\tau = \tau_0 \circ E_{N}$, where $E_{N}$ denotes the canonical conditional expectation from $\ca_r(G)$ onto $\ca_r(N)$ that satisfies $E_N(\lambda_g) = 0$ for $g \in G \setminus N$. Since $N$ is amenable, $\lambda_N$ weakly contains the trivial representation, and hence the unit character on $N$ that sends every element to $1$ extends to a non-canonical trace $\tau_0$ on $\ca_r(N)$.
\end{proof}

Let $G$ be a discrete group, and let $\S(G)$ denote the set of subgroups of $G$. The set $\S(G)$ is compact with respect to the Chabauty topology, which corresponds to the product topology on $\{0,1\}^G$, and $\S(G)$ forms a $G$-space with respect to the conjugation action of $G$.

An {\em invariant random subgroup} of $G$ is a probability measure $\mu$ on $\S(G)$ that is invariant with respect to the adjoint of the conjugation action of $G$ on $\S(G)$. Let $\S_a(G)$ denote the set of amenable subgroups of $G$. Then $\mu$ is said to be {\em amenable} if $\S_a(G)$ is $\mu$-measurable and $\mu(\S_a(G)) = 1$.

The notion of an invariant random subgroup was introduced in \cite{AGV2014}, and the problem was raised whether every amenable invariant random subgroup is concentrated on the amenable radical. This problem was recently solved affirmatively in \cite{BDL2014}. Combining Theorem \ref{thm:unique-trace} and \cite{T2012}*{Corollary 5.15}, we obtain a different proof.

\begin{cor}
Every amenable invariant random subgroup on a discrete group is concentrated on the amenable radical.
\end{cor}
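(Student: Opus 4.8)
The plan is to convert the amenable invariant random subgroup $\mu$ into a tracial state on $\ca_r(G)$, use Theorem \ref{thm:unique-trace} to force that trace to vanish off the amenable radical, and then translate the vanishing back into a statement about the support of $\mu$. This is exactly the combination of \cite{T2012}*{Corollary 5.15} (for the first step) with Theorem \ref{thm:unique-trace} (for the second).

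First I would attach to $\mu$ the function $\varphi_\mu : G \to \bC$ defined by $\varphi_\mu(g) = \mu(\{H \in \S(G) : g \in H\})$. Since $g \in H$ if and only if $gH = eH$ in $G/H$, for any $g_1,\dots,g_n \in G$ and $c_1,\dots,c_n \in \bC$ one has
\[
\sum_{i,j} \overline{c_i} c_j \varphi_\mu(g_i^{-1}g_j) = \int_{\S(G)} \Bigl\| \sum_i c_i \delta_{g_i H} \Bigr\|_{\ell^2(G/H)}^2 \, d\mu(H) \ge 0,
\]
so $\varphi_\mu$ is positive-definite; moreover the conjugation-invariance of $\mu$ makes $\varphi_\mu$ a class function, so the functional $\lambda_g \mapsto \varphi_\mu(g)$ extends to a tracial state on the group algebra. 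The analytic heart of the argument, supplied by \cite{T2012}*{Corollary 5.15}, is that the \emph{amenability} of $\mu$ forces this trace to live on the reduced C*-algebra: for $\mu$-almost every $H$ the subgroup $H$ is amenable, so the trivial representation of $H$ is weakly contained in $\lambda_H$, whence by continuity of induction the quasi-regular representation $\lambda_{G/H}$ is weakly contained in $\lambda_G$; assembling these over $\mu$ by a direct-integral argument shows that the GNS representation of $\varphi_\mu$ is weakly contained in $\lambda_G$. Consequently $\varphi_\mu$ defines a tracial state $\tau_\mu$ on $\ca_r(G)$ with $\tau_\mu(\lambda_g) = \varphi_\mu(g)$ for all $g \in G$.

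With $\tau_\mu$ in hand the conclusion follows at once from Theorem \ref{thm:unique-trace}: since $\tau_\mu = \tau_\mu \circ E_{R_a(G)}$ and $E_{R_a(G)}(\lambda_g) = 0$ for every $g \in G \setminus R_a(G)$, we obtain $\mu(\{H : g \in H\}) = \tau_\mu(\lambda_g) = 0$ for each such $g$. Finally I would translate this into a statement about the support of $\mu$: writing, for countable $G$ (the standard setting for invariant random subgroups),
\[
\{H \in \S(G) : H \not\subseteq R_a(G)\} = \bigcup_{g \in G \setminus R_a(G)} \{H : g \in H\}
\]
exhibits this set as a countable union of $\mu$-null sets, so $\mu$ is concentrated on $\{H : H \subseteq R_a(G)\}$, that is, on the amenable radical.

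The main obstacle is the middle step: verifying that an amenable invariant random subgroup genuinely yields a trace on the \emph{reduced} C*-algebra rather than merely on the full one. This rests on the weak-containment fact $\lambda_{G/H} \prec \lambda_G$ for amenable $H$ together with a measurable assembly of these representations over $\mu$, and it is precisely the content imported from \cite{T2012}*{Corollary 5.15}. Once that correspondence is granted, the remainder is a direct application of Theorem \ref{thm:unique-trace} followed by the elementary union argument above.
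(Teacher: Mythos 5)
Your proof is correct and follows the same route as the paper, which simply combines Theorem \ref{thm:unique-trace} with \cite{T2012}*{Corollary 5.15}; you have merely unpacked the content of the cited corollary (the positive-definiteness of $g \mapsto \mu(\{H : g \in H\})$ and the weak containment $\lambda_{G/H} \prec \lambda_G$ for amenable $H$) rather than taking a different approach. The concluding countable-union argument is fine in the standard countable-group setting for invariant random subgroups.
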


\section{Stability under group extensions}\label{stability}

In this section, we establish Theorem \ref{thm:c-star-simplicity-closed-wrt-extensions} from the introduction. In particular, this yields the stability of C*-simplicity under group extensions. This will require some preliminary lemmas. For a group $G$ and a subgroup $H \leq G$, the centralizer of $H$ is denoted by $C_G(H) = \{s \in G \mid st = ts\ \forall t \in H\}$.

\begin{lem} \label{lem:elements-not-fixing-generate-group}
Let $N$ be a discrete group, let $X$ be an $N$-boundary, and let $U \subset X$ be a non-empty open subset. Then the set $\{t \in N \mid tU \cap U \ne \emptyset \}$ generates $N$.
\end{lem}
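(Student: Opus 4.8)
The plan is to show that the subgroup $H := \langle S \rangle$ generated by $S := \{t \in N \mid tU \cap U \neq \emptyset\}$ equals $N$, by organizing the translates of $U$ into clopen pieces indexed by the coset space $N/H$ and then ruling out more than one piece via strong proximality. First I would record the elementary facts that $e \in S$ and that $S = S^{-1}$ (if $tU \cap U \neq \emptyset$ then, applying $t^{-1}$, one gets $U \cap t^{-1}U \neq \emptyset$). Setting $W := \bigcup_{h \in H} hU$, I would observe that $W$ is open, that $hW = W$ for every $h \in H$ (since $h$ merely reindexes the union over $H$), and hence that the translate $gW$ depends only on the coset $gH$.

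The key combinatorial step is to show that distinct cosets give disjoint translates. If $g_1 W \cap g_2 W \neq \emptyset$, then $g_1 h_1 U \cap g_2 h_2 U \neq \emptyset$ for some $h_1, h_2 \in H$, so $(g_2 h_2)^{-1}(g_1 h_1) \in S \subseteq H$, forcing $g_1 H = g_2 H$. Thus $\{gW : gH \in N/H\}$ is a family of pairwise disjoint non-empty open sets, and each $gW$ is in fact clopen, being the complement of the union of the remaining (open) pieces. Their union is $\bigcup_{g \in N} gU = NU$, which equals $X$ by minimality (the complement of $NU$ is closed, $N$-invariant, and disjoint from the non-empty set $U$, hence empty). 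So $\{gW\}$ is an open cover of the compact space $X$ by pairwise disjoint non-empty sets; any finite subcover must contain every piece, so the cover is finite and $[N:H] = k < \infty$.

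It then remains to prove $k = 1$. The main obstacle is that a boundary need not be connected, so the clopen decomposition alone does not finish the argument; I must genuinely invoke strong proximality. Suppose $k \geq 2$ and list the distinct pieces as $W_1, \ldots, W_k$. The group $N$ permutes them, i.e. $g \cdot W_i = W_{\sigma_g(i)}$ for a permutation $\sigma_g$ of $\{1,\ldots,k\}$, and each functional $\lambda \mapsto \lambda(W_i)$ on $\P(X)$ is weak*-continuous because $W_i$ is clopen. Choosing points $x_i \in W_i$ and setting $\nu = \tfrac1k \sum_{i=1}^{k}\delta_{x_i}$, I would verify that $(g\nu)(W_i) = \nu(g^{-1}W_i) = \tfrac1k$ for every $g$ and every $i$. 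Since the conditions $\lambda(W_i) = \tfrac1k$ are weak*-closed, every measure in the weak*-closure $\overline{N\nu}$ assigns mass $\tfrac1k < 1$ to each $W_i$ and is therefore not a point mass.

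Finally I would derive a contradiction. Fixing any $x \in X$ and applying proximality of the $N$-action on $\P(X)$ to the pair $\nu, \delta_x$, I obtain a net $t_i$ with $\lim t_i \nu = \lim t_i \delta_x$; passing to a subnet, $t_i\delta_x = \delta_{t_ix} \to \delta_y$ for some $y \in X$, because the point masses form a weak*-closed subset of $\P(X)$ homeomorphic to $X$. Hence $\delta_y \in \overline{N\nu}$, contradicting the conclusion of the previous paragraph. Therefore $k = 1$, that is $H = N$, and so $S$ generates $N$.
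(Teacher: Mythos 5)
Your argument is correct. The combinatorial core is the same as the paper's: you form $W=HU$, show the translates $gW$ depend only on the coset $gH$, are pairwise disjoint, open, and cover $X$ by minimality, hence form a finite clopen partition by compactness, so $[N:H]<\infty$. Where you diverge is the final step ruling out $k=[N:H]\geq 2$. The paper observes that the partition induces a continuous $N$-equivariant map from $X$ onto the finite discrete $N$-space $N/H$, that a factor of a proximal space is proximal, and that a finite proximal space (on which the group acts by permutations) must be a singleton. You instead invoke strong proximality directly: you place the uniform atomic measure $\nu$ on a transversal of the pieces, note that $\lambda\mapsto\lambda(W_i)$ is weak*-continuous because $W_i$ is clopen, conclude that every measure in $\overline{N\nu}$ gives each piece mass $1/k$ and so is not a point mass, and derive a contradiction from proximality of the $N$-action on $\P(X)$ applied to the pair $(\nu,\delta_x)$. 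This is valid (your passage from $\lim t_i\delta_x$ to a point mass is fine since the point masses form a weak*-closed subset of $\P(X)$), but it is slightly heavier than necessary: plain proximality of $X$ already suffices, e.g.\ by taking $x\in W_1$, $y\in W_2$, a net $t_i$ with $t_ix$ and $t_iy$ converging to a common point $z$, and noting that eventually both lie in the clopen piece containing $z$ while belonging to the disjoint pieces $t_iW_1$ and $t_iW_2$. So your proof trades the paper's ``finite proximal factor is trivial'' observation for a measure-theoretic argument that needs the stronger hypothesis; both are complete.
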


\begin{proof}
Let $H \le N$ denote the subgroup generated by $\{t \in N \mid tU \cap U \ne \emptyset \}$. Then $HU$ is a non-empty open subset of $X$ such that $tHU \cap HU = \emptyset$ for all $t \in N \setminus H$. By minimality and compactness, $(tHU)_{tH \in N/H}$ is necessarily a finite partition of $X$. Note in particular that $N/H$ is finite. The corresponding equivalence relation on $X$ induces a continuous equivariant map from $X$ to $N/H$. Since $X$ is proximal, $N/H$ is proximal. Being finite, it follows that $N/H$ is a singleton, and hence that $H = N$.
\end{proof}

\begin{lem} \label{lem:boundary-action-induced-by-normal-subgroup}
Let $G$ be a discrete group and let $N \le G$ be a normal subgroup with universal $N$-boundary $\partial_F N$. The $N$-action on $\partial_F N$ extends to a $G$-boundary action on $\partial_F N$.
\end{lem}

\begin{proof}
For $s \in G$, define $\sigma_s \in \aut(N)$ by $\sigma_s(t) = sts^{-1}$ for $t \in N$. Mapping $N$ into $\aut(N)$ via $\sigma$, it follows from \cite{G1976}*{Proposition II.4.3} that the $N$-action on $\partial_F N$ has an extension to an action of $\aut(N)$ on $\partial_F N$. Composing this action with the map from $G$ into $\aut(N)$ gives a $G$-action on $\partial_F N$ that extends the $N$-action. Since $\partial_F N$ is an $N$-boundary, it is clear that this is a $G$-boundary action
\end{proof}

\begin{lem} \label{lem:acts-freely-or-trivially-centralizer}
Let $G$ be a discrete group and let $N \le G$ be a C*-simple normal subgroup with universal $N$-boundary $\partial_F N$. Then the action of $s \in G$ on $\partial_F N$ is either trivial or free, where the $G$-action on $\partial_F N$ is defined as in Lemma \ref{lem:boundary-action-induced-by-normal-subgroup}. The former possibility occurs if and only if $s \in C_G(N)$.
\end{lem}

\begin{proof}
If $s \in G$ belongs to $C_G(N)$, then the automorphism $\sigma_s \in \aut(N)$ defined as in the proof of Lemma \ref{lem:boundary-action-induced-by-normal-subgroup} is trivial, and it follows from the construction of the $G$-action on $\partial_F N$ that $s$ acts trivially on $\partial_F N$.

For the converse, fix $s \in G$ such that the set of $s$-fixed points $Fix(s)$ is non-empty. For every $t \in N$ such that $tFix(s) \cap Fix(s) \ne \emptyset$, the actions of the elements $sts^{-1}$ and $t$ coincide on $Fix(s) \cap t^{-1}Fix(s)$. However, since $N$ is C*-simple, Theorem \ref{thm:c-star-simplicity-2} implies that $N$ acts freely on $\partial_F N$. Thus $sts^{-1} = t$. By Lemma \ref{lem:elements-not-fixing-generate-group}, the elements with this property generate $N$. Hence $s \in C_G(N)$.
\end{proof}

We are now ready for the proof of Theorem \ref{thm:c-star-simplicity-closed-wrt-extensions}.

\begin{proof}[Proof of Theorem \ref{thm:c-star-simplicity-closed-wrt-extensions}]
For brevity, let $K = C_G(N)$ and let $L = NK$. Note that $K$ and $L$ are normal in $G$.

Consider the following three $G$-boundary actions. First, by Lemma \ref{lem:boundary-action-induced-by-normal-subgroup}, the $N$-action on $\partial_F N$ extends to a $G$-boundary action on $\partial_F N$. Similarly, the $K$-action on $\partial_F K$ extends to a $G$-action on $\partial_F K$. Finally, we have a $G$-boundary action on $\partial_F(G/L)$.

By the construction of these $G$-actions, $N$ acts trivially on $\partial_F K$ and $\partial_F(G/L)$, but minimally on $\partial_F N$. On the other hand, $K$ acts trivially on $\partial_F N$ and $\partial_F(G/L)$, but minimally on $\partial_F K$. Since $G$ acts minimally on $\partial_F(G/L)$, it is not difficult to see that the diagonal $G$-action on
\[
X := \partial_F N \times \partial_F K \times \partial_F(G/L)
\]
is a boundary action.

First suppose that both $N$ and $K$ are C*-simple. Then by Theorem \ref{thm:c-star-simplicity-2}, the $N$-action on $\partial_F N$ and the $K$-action on $\partial_F K$ are free. If $s \in G$ does not act freely on $X$, then in particular $s$ does not act freely on either $\partial_F N$ or $\partial_F K$. Hence by Lemma \ref{lem:acts-freely-or-trivially-centralizer}, $s \in K$. Then since $K$ acts freely on $\partial_F K$, it follows that $s = e$. Therefore, $G$ acts freely on $X$, and hence $G$ is C*-simple by Theorem \ref{thm:c-star-simplicity-2}.

Conversely, suppose that $G$ is C*-simple. For $(x,y,z) \in X$, consider the stabilizer $G_{(x,y,z)}$. Lemma \ref{amen} implies that $G_{(x,y,z)} \cap L = N_x K_y$ and $G_{(x,y,z)}/(G_{(x,y,z)} \cap L) \subset (G/L)_z$ are both amenable. Hence $G_{(x,y,z)}$ is amenable. Therefore, by Proposition \ref{prop:not-weakly-contained-quasi-regular}, the $G$-action on $X$ is topologically free. It follows that the $N$-action on $\partial_F N$ and the $K$-action on $\partial_F K$ are also topologically free. Hence by Theorem \ref{thm:c-star-simplicity-2}, $N$ and $K$ are C*-simple.

Finally, to see that C*-simplicity is closed under extension, suppose that $N$ and $G/N$ are C*-simple. Observe that $N$ necessarily has trivial center, so $C_G(N)$ is isomorphic to a normal subgroup of $G/N$. The above results imply that $C_G(N)$ is C*-simple, and hence that $G$ is C*-simple.
\end{proof}

\begin{rem}
Note that Theorem \ref{thm:c-star-simplicity-closed-wrt-extensions} provides a negative answer to \cite{DP2011}*{Question (Q)}.
Also note that the analogous result for the property of having trivial amenable radical is proved in \cite{T2012}*{Lemma B.6}.
\end{rem}

\begin{rem}[stability under finite index subgroups]
Theorem \ref{thm:c-star-simplicity-closed-wrt-extensions} also straightforwardly implies that if $H$ has finite index in $G$, then C*-simplicity of $G$ implies that of $H$, while vice versa the C*-simplicity of $H$ and the absence of non trivial finite normal subgroup in $G$ implies the C*-simplicity of $G$, (see also \cite{D2007}*{Proposition 19}).
\end{rem}

\section{Examples of C*-simple groups}\label{sec:examples}
In this section, we put forward a simple criterion, which implies C*-simplicity, and can be used to easily prove the C*-simplicity of many groups.

\begin{defn}[Normalish subgroups]
Let $G$ be a group. A subgroup $H \le G$ is said to be  {\em normalish} if for every $n \geq 1$ and $t_1,\ldots,t_n \in G$ the intersection $\cap_i t_i H t_i^{-1}$ is infinite.
\end{defn}

This notion is slightly stronger than the notion of an $n$-step s-normal subgroup, which was introduced in \cite{BFS2013}. Note that a subgroup that is $n$-step s-normal for every $n \geq 1$ is normalish. Our main observation is the following.

\begin{thm}\label{thm:not-c-star-simple-implies-normalish} A discrete group $G$ with no non-trivial finite normal subgroups and no amenable normalish subgroups is C*-simple.
\end{thm}

\begin{proof} Let $G$ be a discrete group with no non-trivial finite normal subgroup. To show that $G$ is C*-simple, it is enough  to prove that $G$ acts topologically freely on $\fb$, according to Theorem \ref{thm:c-star-simplicity-2}. If that is not the case, we claim that each $G_x$, $x \in \fb$ is normalish. Indeed let $s \in G\setminus \{e\}$ have a fixed point set $Fix(s)$ with non empty interior. By strong proximality any finite set of points $x_1,\ldots,x_k$ in $\fb$ can be mapped into the interior of $Fix(s)$ by some $g \in G$, so that $g^{-1}sg$ fixes each $x_i$. In particular this applies to $x_i=t_i x$, showing that the intersection $\cap_i t_i G_x t_i^{-1}$ is non trivial. If this is finite for some choice of $t_i$'s, but non trivial for all choices of $t_i$'s, then $\cap_{g \in G} gG_x g^{-1}$ is a non trivial finite group normal subgroup, contrary to our assumption. Hence $G_x$ is normalish for each $x \in \fb$. Finally since point stabilizers are also amenable by Lemma \ref{amen}, this ends the proof.
\end{proof}

We note however that the converse does not hold (see Subsection \ref{sec:baumslag-solitar}). In the next two subsections we apply this criterion to show the C*-simplicity of many new groups and recover that  of virtually all previously known examples.

\subsection{Bounded cohomology and $\ell^2$-Betti numbers} \label{sec:cohomology}

We recall that a group is said to be C*-simple if its reduced C*-algebra is simple. In this section we will prove the C*-simplicity of discrete groups with trivial amenable radical, supposing that they have either non-trivial bounded cohomology, or non-vanishing $\ell^2$-Betti numbers. The key idea is that these conditions preclude the existence of amenable normalish subgroups.

Let $G$ be a discrete group. For $n \geq 0$, let $\beta_n^{(2)}$ denote the $n$-th $\ell^2$-Betti number of $G$ (see e.g. \cite{L1998}). For the result about non-vanishing $\ell^2$-Betti numbers, we require the following special case of \cite{BFS2013}*{Theorem 1.3}.

\begin{prop}\label{betti}
Let $G$ be a discrete group. If $G$ contains an amenable normalish subgroup, then $\beta_n^{(2)} = 0$ for every $n \geq 0$.
\end{prop}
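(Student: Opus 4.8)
The plan is to reduce the statement to \cite{BFS2013}*{Theorem 1.3}, which bounds the vanishing range of the $\ell^2$-Betti numbers of $G$ in terms of the number of s-normal steps linking a subgroup to $G$. The two facts I would combine are that an infinite amenable group has all of its $\ell^2$-Betti numbers equal to zero, and that an amenable normalish subgroup is $n$-step s-normal for every $n \ge 1$ (the converse of the observation recorded just before the statement). Granting these, \cite{BFS2013}*{Theorem 1.3} applied at step-length $n$ forces $\beta_p^{(2)}(G) = 0$ for all $p$ in a range that grows with $n$, and since $n$ is arbitrary every $\beta_p^{(2)}(G)$ vanishes.

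First I would dispose of the elementary points. Let $H < G$ be the amenable normalish subgroup. Taking the single conjugating element $t_1 = e$ in the definition shows $H$ is infinite; hence $G$ is infinite and $\beta_0^{(2)}(G) = 0$, while the infinite amenable group $H$ satisfies $\beta_p^{(2)}(H) = 0$ for all $p \ge 0$. Next, taking $t_1 = e$ and $t_2 = g$ shows $H \cap gHg^{-1}$ is infinite for every $g \in G$, so $H$ is s-normal in $G$; the point is that because \emph{every} finite intersection of conjugates of $H$ is infinite, the same infinitude survives inside any subgroup lying between $H$ and $G$, which is what lets one iterate.

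The core step is therefore to promote the normalish property of $H$ to the assertion that $H$ is $n$-step s-normal in $G$ for each $n$, by exhibiting, for every $n$, a chain $H = H_0 \le H_1 \le \cdots \le H_n = G$ in which each $H_i$ is s-normal in $H_{i+1}$. I expect this to be the main obstacle: the normalish condition only guarantees that finite intersections of conjugates are infinite and does not hand over a canonical subnormal tower, so the intermediate groups must be produced directly and each inclusion checked to be s-normal. Once such chains of every length are available, feeding them into \cite{BFS2013}*{Theorem 1.3} together with the vanishing of all $\ell^2$-Betti numbers of $H$ yields $\beta_p^{(2)}(G) = 0$ for every $p \ge 0$, completing the proof.
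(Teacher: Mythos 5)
Your reduction is the one the paper itself uses: Proposition \ref{betti} is stated there without proof, as a special case of \cite{BFS2013}*{Theorem 1.3}, and the two ingredients you isolate --- that an infinite amenable group has vanishing $\ell^2$-Betti numbers in all degrees, and that an amenable normalish subgroup is $n$-step s-normal for every $n$ --- are exactly the intended ones. Your preliminary observations (taking $t_1=e$ to see that $H$, and hence $G$, is infinite, so $\beta_0^{(2)}(G)=0$, and taking $\{e,g\}$ to see that $H$ is s-normal) are correct.

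The difficulty is the step you yourself single out as the main obstacle. You read ``$n$-step s-normal'' as the existence of a tower $H=H_0\le H_1\le\cdots\le H_n=G$ with each inclusion s-normal, and you are right that the normalish hypothesis provides no way to manufacture the intermediate subgroups $H_i$; on that reading the argument stalls, and your proposal leaves the step unresolved. But that is not the notion used in \cite{BFS2013}: there $n$-step s-normality of $H<G$ is formulated in terms of intersections of conjugates, requiring that $H\cap t_1Ht_1^{-1}\cap\cdots\cap t_nHt_n^{-1}$ be infinite for all $t_1,\ldots,t_n\in G$. Since any intersection $\bigcap_{i=1}^{n} t_iHt_i^{-1}$ is conjugate (by $t_1$) to one of this form, normalish is in fact equivalent to being $n$-step s-normal for every $n$; the implication you need is the immediate converse of the remark the paper records just after the definition of normalish, and requires no intermediate subgroups at all. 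So the ``main obstacle'' is an artifact of the wrong definition: once it is removed there is nothing left to prove, and your argument collapses to the paper's one-line citation of \cite{BFS2013}*{Theorem 1.3}.
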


Next, we consider the case when $G$ has non-trivial bounded cohomology. Recall (see e.g. \cite{M2001}) that a {\em coefficient $G$-module} $(\pi,E)$ is an isometric linear $G$-representation $\pi$ on a dual Banach space $E$ (specifically, the dual of a separable Banach space) such that the operators in the image of $\pi$ are weak*-continuous.

For $n \geq 0$, the {\em bounded cohomology} group $H_b^n(G,E)$ of $G$ with coefficient module $(\pi,E)$ is the $n$-th cohomology group of the homogeneous cochain complex
\[
0 \longrightarrow \ell^\infty(G,E)^G \stackrel{d_1}{\longrightarrow}  \ell^\infty(G^2,E)^G \stackrel{d_2}{\longrightarrow} \ell^\infty(G^3,E)^G \stackrel{d_3}{\longrightarrow} \cdots
\]
consisting of bounded $G$-invariant functions. The $G$-action on $\ell^\infty(G^n,E)^G$ is given by
$(s \cdot f)(x_0,\ldots,x_n) = f(s^{-1}x_0,\ldots,s^{-1}x_n)$ and
\[
(d_n f)(s_0,\ldots,s_{n})=\sum_{j=0}^{n}(-1)^j f(s_0,\ldots,s_{j-1},s_{j+1},\ldots,s_{n})
\]
for $s \in G$ and $f \in \ell^\infty(G^n,E)^G$. Namely, $H_{\mathrm{b}}^n(G,E)=\ker d_{n+1} / \ran d_n$. See \cite{M2001} for details.

The dual Banach $G$-module $E$ is said to be {\em mixing} if the stabilizer subgroup $G_x = \{ s \in G \mid sx = x \}$ is finite for every $x \in E \setminus \{0\}$. Examples of such $G$-modules include $\ell^p(G)$ for $1\le p < \infty$.

The proof of the next result is similar to the proofs of \cite{M2001}*{Corollary 7.5.9, Corollary 7.5.10}.

\begin{prop} \label{thm:trivial-bdd-cohomology}
Let $G$ be a discrete group, and let $(\pi,E)$ be a coefficient $G$-module with $E$ mixing. If $G$ contains an amenable normalish subgroup, then $H_b^n(G,E)$ is trivial for every $n \geq 0$ and every mixing dual Banach $G$-module.
\end{prop}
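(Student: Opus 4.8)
The plan is to compute $H_b^n(G,E)$ using a resolution adapted to the amenable normalish subgroup $H$, and then to observe that mixing, together with the normalish condition, forces every cochain in this resolution to vanish identically. Since $H$ is amenable, the transitive $G$-action $G \acts G/H$ is amenable in the sense of Zimmer. By Monod's theory of bounded cohomology via amenable actions --- the mechanism behind \cite{M2001}*{Corollary 7.5.9, Corollary 7.5.10} --- for such an amenable $G$-space the homogeneous complex $\ell^\infty((G/H)^{\bullet+1}, E)$ is a strong relatively injective resolution of $E$, so that $H_b^n(G,E)$ is computed as the cohomology in degree $n$ of the complex of $G$-equivariant bounded functions
\[
0 \to \ell^\infty(G/H, E)^G \to \ell^\infty((G/H)^2, E)^G \to \cdots \to \ell^\infty((G/H)^{n+1}, E)^G \to \cdots .
\]
First I would recall and verify the two ingredients of this reduction: that $H$ amenable implies $G \acts G/H$ is amenable, and that an amenable action yields relative injectivity of the modules $\ell^\infty((G/H)^{n+1}, E)$; the strongness of the resolution comes from the standard contracting homotopy, and in the discrete setting there are no measurability issues.

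The second, and now purely elementary, step is to show that each cochain space in this complex is $\{0\}$. Fix a tuple $\sigma = (g_0 H, \ldots, g_n H) \in (G/H)^{n+1}$. Its stabilizer in $G$ is $\bigcap_{i=0}^n g_i H g_i^{-1}$, which is infinite precisely because $H$ is normalish (this is exactly the hypothesis that makes tuple-stabilizers infinite, just as a normal subgroup would require only $H$ itself to be infinite). If $f \in \ell^\infty((G/H)^{n+1}, E)^G$, then $G$-equivariance gives $\pi(g) f(\sigma) = f(g\sigma) = f(\sigma)$ for every $g$ in this stabilizer, so $f(\sigma)$ is a vector fixed by an infinite subgroup of $G$. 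Since $E$ is mixing, the only such vector is $0$; hence $f(\sigma) = 0$. As $\sigma$ was arbitrary, $f \equiv 0$, so every term of the complex vanishes and therefore $H_b^n(G,E) = 0$ for all $n \geq 0$.

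I expect the main obstacle to lie entirely in the first step: justifying that this particular complex computes bounded cohomology. Everything rests on replacing the usual bar resolution $\ell^\infty(G^{\bullet+1}, E)$ by the coset resolution $\ell^\infty((G/H)^{\bullet+1}, E)$, which is legitimate only because $H$ is amenable (equivalently, because $G \acts G/H$ is amenable). Once this homological input is in place, the mixing/normalish argument is immediate and uniform in $n$, and it also covers the degree-zero case $H_b^0(G,E) = E^G = 0$, using that $H$ itself is infinite (take $n=1$ and $t_1 = e$ in the definition of normalish). I would therefore devote the bulk of the write-up to pinning down the precise statements from \cite{M2001} and checking their hypotheses, and dispatch the vanishing in a single line.
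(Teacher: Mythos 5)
Your proposal is correct and follows essentially the same route as the paper: replace the bar resolution by the coset complex $\ell^\infty((G/H)^{\bullet},E)^G$ (legitimate since $H$ is amenable, via Monod's machinery behind \cite{M2001}*{Corollary 7.5.9, Corollary 7.5.10}), then observe that each cochain value is fixed by the tuple-stabilizer $\cap_i t_i H t_i^{-1}$, which is infinite by normalishness, so mixing forces it to vanish. The paper's proof is exactly this two-step argument, stated slightly more tersely.
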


\begin{proof}
Let $H \le G$ be an amenable normalish subgroup. We can compute the bounded cohomology of $G$ using the  complex
\[
0 \longrightarrow \ell^\infty(G/H,E)^G \longrightarrow  \ell^\infty((G/H)^2,E)^G \longrightarrow \ell^\infty((G/H)^3,E)^G \longrightarrow \cdots
\]
We claim that $\ell^\infty((G/H)^n,E)^G$ vanishes, and hence that $H_b^n(G,E)$ also vanishes. To see this, fix $f \in \ell^\infty((G/H)^n,E)^G$. Then for every $t = (t_1,\ldots,t_n) \in (G/H)^n$, the element $f(t) \in E$ is left invariant by every element in $\cap_i t_i H t_i^{-1}$. Since $E$ is mixing, it follows that $f(t) = 0$.
\end{proof}

We will say that a discrete group $G$ has {\em non-trivial bounded cohomology} if there is a coefficient $G$-module $(\pi,E)$ with $E$ mixing such that $H_b^n(G,E)$ is non-trivial for some $n \geq 0$.

\begin{thm} \label{thm:vanishing-cohomology-or-betti-numbers-c-star-simple}
Let $G$ be a discrete group with trivial amenable radical such that either
\begin{enumerate}
\item $G$ has non-trivial bounded cohomology, or
\item $G$ has non-vanishing $\ell^2$-Betti numbers.
\end{enumerate}
Then $G$ is C*-simple.
\end{thm}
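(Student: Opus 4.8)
The plan is to deduce C*-simplicity directly from Theorem \ref{cor:not-c-star-simple-implies-normalish}, whose hypotheses are that $G$ has no non-trivial finite normal subgroup and no amenable normalish subgroup. So the entire task reduces to verifying these two conditions under each of the hypotheses (1) and (2), and the two cohomological Propositions \ref{betti} and \ref{thm:trivial-bdd-cohomology} are precisely designed to supply the second condition via their contrapositives.

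First I would dispose of the finite normal subgroup condition, which is uniform across both cases and uses only the triviality of the amenable radical. Since every finite group is amenable, any finite normal subgroup of $G$ is in particular an amenable normal subgroup, hence is contained in the amenable radical by the defining property recalled after \cite{D1957}. As the amenable radical is trivial by assumption, $G$ can have no non-trivial finite normal subgroup. This step is essentially immediate and requires no further input.

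Next I would rule out amenable normalish subgroups by splitting into the two cases and applying the relevant proposition contrapositively. In case (2), suppose for contradiction that $G$ contained an amenable normalish subgroup; then Proposition \ref{betti} would force $\beta_n^{(2)} = 0$ for all $n \geq 0$, contradicting the assumed non-vanishing of some $\ell^2$-Betti number. In case (1), suppose again that $G$ contained an amenable normalish subgroup; then Proposition \ref{thm:trivial-bdd-cohomology} would force $H_b^n(G,E)$ to vanish for every $n \geq 0$ and every mixing coefficient module $E$, contradicting the definition of non-trivial bounded cohomology (the existence of some such module with $H_b^n(G,E) \neq 0$). In either case $G$ has no amenable normalish subgroup. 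With both hypotheses of Theorem \ref{cor:not-c-star-simple-implies-normalish} now in hand, that theorem yields the C*-simplicity of $G$.

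I do not expect any genuine obstacle at the level of this final assembly: the conceptual and technical work has already been carried out in Propositions \ref{betti} and \ref{thm:trivial-bdd-cohomology}, which establish that an amenable normalish subgroup kills both families of invariants. The only point requiring any care is the bookkeeping of the contrapositives, ensuring that ``non-trivial bounded cohomology'' (an existential statement over mixing modules) is correctly negated against the universal vanishing statement of Proposition \ref{thm:trivial-bdd-cohomology}, and likewise that the finite-normal-subgroup hypothesis is extracted from the amenable radical rather than from the normalish condition.
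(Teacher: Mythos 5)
Your proposal is correct and follows exactly the same route as the paper: the paper's proof is a two-line invocation of Propositions \ref{betti} and \ref{thm:trivial-bdd-cohomology} to exclude amenable normalish subgroups, followed by Theorem \ref{cor:not-c-star-simple-implies-normalish}. Your explicit verification that a trivial amenable radical rules out non-trivial finite normal subgroups is a detail the paper leaves implicit, and you handle it correctly.
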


\begin{proof} Proposition \ref{betti} and Proposition \ref{thm:trivial-bdd-cohomology} imply that $G$ has no amenable normalish subgroup. The conclusion follows from Theorem \ref{thm:not-c-star-simple-implies-normalish}.
\end{proof}

The class of groups $\C_{\text{reg}}$ was introduced in \cite{MS2006}*{Notation 1.2}. It consists of those countable discrete groups $G$ satisfying $H_b^2(G,\ell^2(G)) \ne 0$, which can be seen as a cohomological analogue of the property of having negative curvature. This includes groups admitting a non-elementary proper isometric action on some Gromov-hyperbolic graph of bounded valency, groups admitting a non-elementary proper isometric action on some proper CAT(-1) space, and groups admitting a non-elementary simplicial action on some simplicial tree.

The closely related class of groups $\D_{\text{reg}}$ was introduced in \cite{T2009}*{Definition 2.6} as a variation on the class $\C_{\text{reg}}$. It consists of those countable discrete groups with the property that there exists an unbounded quasi-cocycle from $G$ to $\ell^2(G)$.

The classes $\C_{\text{reg}}$ and $\D_{\text{reg}}$ both properly contain the class of acylindrically hyperbolic groups introduced in \cite{O2013}. This latter class includes all non-elementary hyperbolic and relatively hyperbolic groups, outer automorphism groups of free groups on two or more generators, all but finitely many mapping class groups of punctured closed surfaces and most 3-manifold groups. It was proved in \cite{DGO2011}*{Theorem 2.32} that an acylindrically hyperbolic group is C*-simple if and only if its amenable radical is trivial.

By \cite{T2009}*{Lemma 2.8}, every group in $\D_{\text{reg}}$ has either non-vanishing first $\ell^2$-Betti number or non-trivial second bounded cohomology with coefficients in $\ell^2(G)$. Therefore, Theorem \ref{thm:vanishing-cohomology-or-betti-numbers-c-star-simple} implies the following generalization of \cite{DGO2011}*{Theorem 2.32}.

\begin{cor}
A group in $\C_{\text{reg}}$ or $\D_{\text{reg}}$ is C*-simple if and only if its amenable radical is trivial.
\end{cor}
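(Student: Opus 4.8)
The plan is to establish both directions of the equivalence, noting first that the forward direction is essentially standard. If a group $G$ is C*-simple, then its amenable radical must be trivial: any non-trivial amenable normal subgroup would obstruct simplicity of $\ca_r(G)$ by the classical necessary condition recalled in the introduction. So the content lies in the reverse implication, where I would assume the amenable radical is trivial and deduce C*-simplicity using the membership of $G$ in $\C_{\text{reg}}$ or $\D_{\text{reg}}$. The strategy is to feed these structural hypotheses into Theorem \ref{thm:vanishing-cohomology-or-betti-numbers-c-star-simple}, which reduces everything to verifying a cohomological non-vanishing condition.

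Concretely, for a group $G \in \C_{\text{reg}}$, the defining property is precisely $H_b^2(G,\ell^2(G)) \neq 0$, so $G$ has non-trivial bounded cohomology with coefficients in the mixing module $\ell^2(G)$; this puts $G$ directly into case (1) of Theorem \ref{thm:vanishing-cohomology-or-betti-numbers-c-star-simple}. For a group $G \in \D_{\text{reg}}$, I would invoke \cite{T2009}*{Lemma 2.8}, which guarantees that $G$ has either non-vanishing first $\ell^2$-Betti number or non-trivial second bounded cohomology with coefficients in $\ell^2(G)$. The first alternative places $G$ in case (2), and the second places it in case (1). In either case, with the standing hypothesis that the amenable radical of $G$ is trivial, Theorem \ref{thm:vanishing-cohomology-or-betti-numbers-c-star-simple} yields C*-simplicity.

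I would write the proof as a short citation-based argument, since all the genuine analytic work has already been encapsulated in the earlier propositions. The only point requiring care is checking that $\ell^2(G)$ is indeed a mixing coefficient $G$-module in the sense required by Theorem \ref{thm:vanishing-cohomology-or-betti-numbers-c-star-simple}: the stabilizer of any non-zero $\xi \in \ell^2(G)$ under the (left or right) regular representation is finite, as already noted in the paper when $\ell^p(G)$ is listed as an example of a mixing module. I do not anticipate any serious obstacle here, as the main theorems do all the heavy lifting; the result is a clean corollary. The only mild subtlety is bookkeeping between the two classes and making sure each of the three possible cohomological outcomes is correctly routed into the appropriate hypothesis of the theorem.
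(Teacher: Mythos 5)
Your proposal is correct and follows exactly the paper's route: route $\C_{\text{reg}}$ into case (1) of Theorem \ref{thm:vanishing-cohomology-or-betti-numbers-c-star-simple} via its defining property $H_b^2(G,\ell^2(G))\neq 0$ with $\ell^2(G)$ mixing, and route $\D_{\text{reg}}$ into case (1) or (2) via \cite{T2009}*{Lemma 2.8}, with the forward direction being the classical fact that C*-simplicity forces a trivial amenable radical. Nothing further is needed.
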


Recall that a discrete group is said to be {\em strongly non-amenable} if it has positive first $\ell^2$-Betti number.

\begin{cor}
A strongly non-amenable group is C*-simple if and only if its amenable radical is trivial.
\end{cor}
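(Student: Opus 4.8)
The plan is to establish the two implications of the biconditional separately, noting at the outset that essentially all of the substantive work has already been carried out in Theorem \ref{thm:vanishing-cohomology-or-betti-numbers-c-star-simple}. For the forward implication, I would appeal to the well-known fact, recalled in the introduction, that C*-simplicity forces the amenable radical to be trivial; a non-trivial amenable normal subgroup acts trivially on the universal boundary $\fb$ and hence obstructs the topological freeness that Theorem \ref{thm:c-star-simplicity} requires. This direction needs no new argument and applies to any discrete group, irrespective of its $\ell^2$-Betti numbers.

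For the converse, I would simply observe that strong non-amenability is a special case of the hypothesis in part (2) of Theorem \ref{thm:vanishing-cohomology-or-betti-numbers-c-star-simple}. By definition a strongly non-amenable group satisfies $\beta_1^{(2)} > 0$, so in particular $\beta_1^{(2)} \neq 0$ and $G$ has non-vanishing $\ell^2$-Betti numbers. Granting the standing assumption that the amenable radical of $G$ is trivial, Theorem \ref{thm:vanishing-cohomology-or-betti-numbers-c-star-simple} then yields C*-simplicity directly, with no intervening computation.

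There is no genuine obstacle at this stage: the real content lies upstream in the chain Proposition \ref{betti} $\Rightarrow$ Theorem \ref{cor:not-c-star-simple-implies-normalish} $\Rightarrow$ Theorem \ref{thm:vanishing-cohomology-or-betti-numbers-c-star-simple}, where the positivity of an $\ell^2$-Betti number is used (via the Bader--Furman--Sauer result) to exclude amenable normalish subgroups, and the absence of such subgroups together with the absence of non-trivial finite normal subgroups produces a topologically free, hence free, boundary action on $\fb$. The only point to verify in the corollary itself is the trivial inclusion of hypotheses, so the proof reduces to recording the standard necessity of a trivial amenable radical and then citing Theorem \ref{thm:vanishing-cohomology-or-betti-numbers-c-star-simple}.
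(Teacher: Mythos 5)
Your proposal is correct and matches the paper's (implicit) argument exactly: the corollary is stated as an immediate consequence of Theorem \ref{thm:vanishing-cohomology-or-betti-numbers-c-star-simple}(2), since strong non-amenability means $\beta_1^{(2)}>0$, and the forward direction is the standard fact that C*-simplicity forces a trivial amenable radical. No further comment is needed.
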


We note that by \cite{G2000}*{Lemme V.3}, if a group $G$ contains an amenable $1$-step s-normal subgroup, then the cost is zero for every probability measure preserving action of $G$.

\subsection{Linear groups}\label{sec:linear}

A linear group is a subgroup of $GL_n(K)$ for some field $K$. A discrete linear group is a linear group endowed with the discrete topology.

\begin{thm}\label{thm:linear-groups-non-amenability-normalish}
Let $G$ be a discrete linear group. If $H \le G$ is an amenable subgroup, then there is a finite subset $F \subset G$ such that $\bigcap_{t \in F} tHt^{-1}$ is contained in the amenable radical of $G$. In particular, if $G$ has trivial amenable radical, then every normalish subgroup of $G$ is non-amenable.
\end{thm}

\begin{proof}
Let $G$ be a subgroup of $GL_n(K)$ for some algebraically closed field $K$ and let $H \le G$ be an amenable subgroup. We may assume that $R_a(G) \le H$.

For a finite subset $F \subset G$, let $H_F = \cap_{t \in F} tHt^{-1}$, and let $L_F$ denote the Zariski closure of $H_F$, which is an algebraic subgroup of $GL_n(K)$. Applying the descending chain condition for varieties, the intersection $L = \cap_F L_F$ over finite subsets $F \subset G$ is actually a finite intersection, and hence $L = L_{F_0}$ for some finite subset $F_0 \subset G$.

Observe that for $t \in G$, $t L_{F_0} t^{-1} = L_{tF_0}$. But by construction, $L_{F_0} \subset  L_{tF_0}$. Thus, $L \subset t L t^{-1}$, and it follows that $L$ is normalized by $G$. Hence $L \cap G$ is a normal subgroup of $G$. We will show that $L \cap G$ is amenable. Since $H_{F_0} \subset L \cap G$, this will prove the result.

Suppose for the sake of contradiction that $L \cap G$ is not amenable. Let $A = H_{F_0}$. Then $A$ is an amenable subgroup of $G$ with the property that $A_F$ is Zariski dense for every finite subset $F \subset G$. Hence by replacing $G$ with $L \cap G$, we may assume that $G$ is a subgroup of the algebraic group $L$ containing an amenable subgroup $A$ such that $A_F$ is Zariski-dense in $L$ for every finite subset $F \subset G$.

Replacing $G$ with $L^0 \cap G$, where $L^0$ is the connected component of the identity in $L$, we may further assume that $L$ is connected as an algebraic group. Observe that $L$ is not solvable, for otherwise $G$ would be amenable. Hence $L$ admits a center-free simple quotient. Projecting $G$ to that quotient, we see that we may assume additionally that $L$ is a connected center-free simple algebraic $K$-group.

Recall that according to the Tits alternative \cite{Ti1972} every amenable finitely generated linear group is virtually solvable. In characteristic zero this remains true without the finite generation assumption. So first suppose that $K$ has characteristic zero. Then we may conclude that $A$ is virtually solvable. Hence $L$ is also virtually solvable, contradicting our assumption that $L$ is simple.

Now suppose that $K$ has characteristic $p > 0$. We first claim that $A$ is locally finite. To see this, we require the following definition. For an arbitrary subgroup $\Gamma$ of $GL_n(K)$, let $\core(\Gamma)$ denote the inductive limit
\[
\core(\Gamma) = \underrightarrow{\lim} \operatorname{cls}(\Lambda)^0,
\]
as $\Lambda$ ranges over all finitely generated subgroups of $\Gamma$, where $\operatorname{cls}(\Lambda)^0$ denotes the connected component of the identify of the Zariski closure of $\Lambda$. Then $\core(\Gamma)$ is a connected algebraic subgroup of $GL_n(K)$ that is normalized by $\Gamma$ and $\core(\Gamma) \cap \Gamma$ is Zariski-dense in $\core(\Gamma)$. Moreover, $\Gamma$ is locally finite if and only if $\core(\Gamma)$ is trivial.

Now $\core(A)$ is a connected algebraic subgroup that is normalized by $A$, and hence by $L$. According to the Tits alternative every finitely generated subgroup of $A$ is virtually solvable. Hence its Zariski closure has a solvable connected component of the identity. It follows that $\core(A)$ is solvable. Since $L$ is simple, we conclude that $\core(A)$ is trivial, and hence that $A$ is locally finite.

Let $\overline{\mathbb{F}_p}$ denote the algebraic closure of the prime field $\mathbb{F}_p$ in $K$. Let $GL_n(K)$ act on $M_n(K)$ by left multiplication. We claim that there is a $G$-invariant $K$-vector subspace $W$ in $M_n(K)$, which admits a $\overline{\mathbb{F}_p}$-structure, i.e. an $\overline{\mathbb{F}_p}$-vector subspace $W_0$ such that $W=W_0 \otimes_{\overline{\mathbb{F}_p}} K$, such that $A$ preserves $W_0$. Since $W$ is $G$-invariant, it is also $L$-invariant, and making $L$ act on $W$, we obtain an embedding of $L$ as an algebraic subgroup of $GL_m(K)$, $m=\dim W$. Under this embedding $G \le GL_m(K)$ and $A \le GL_m(\overline{\mathbb{F}_p})$ are Zariski-dense in $L$. Hence, up to changing $n$ into $m$, and modulo the claim, we may assume that $A \le GL_n(\overline{\mathbb{F}_p})$.

Now it is easy to reach a contradiction: for every $t \in G$, $A \cap t A t^{-1}$ is Zariski-dense in $L$. In particular, since $L$ is center-free, the intersection of the centralizers $C_L(h):=\{x \in L; xa=ax\}$ for $a$ varying in $A \cap t A t^{-1}$ is trivial. And by the descending chain condition, there is a finite set of $a$'s, say $a_1,\ldots,a_k$ in $A \cap t A t^{-1}$ such that $\cap_1^k C_L(a_i)$ is trivial. By assumption $t^{-1}a_it \in GL_n(\overline{\mathbb{F}_p})$ for each $i=1,\ldots,k$. Thus there is a Galois automorphism $\sigma$ of $K$, a power of the Frobenius map $x \mapsto x^p$, such that $\sigma(t^{-1}a_it) = t^{-1}a_it$ for each $i=1,\ldots,k$. This implies that $\sigma(t)t^{-1}$ commutes with each $a_i$, and hence is trivial. This means that $\sigma(t)=t$, i.e. that $t \in GL_n(\overline{\mathbb{F}_p})$. Hence  $ G \le GL_n(\overline{\mathbb{F}_p})$, and $G$ is locally finite, hence amenable, which is a contradiction.

It remains to verify the claim. Note that since every element $a$ of $A$ has finite order, its eigenvalues belong to $\overline{\mathbb{F}_p}$. Now consider the trace $tr(xy)$ on $M_n(K)$ as a non-degenerate bilinear form. Let $W$ be the $K$-vector subspace of $M_n(K)$ generated by all matrices $a \in A$. Pick a basis $a_1,\ldots,a_m \in A$ of $W$. Then the linear map sending $w \in W$ to the $m$-tuple $(tr(wa_1),\ldots,tr(wa_m))$ is a $K$-linear isomorphism, which sends the $\overline{\mathbb{F}_p}$-linear span of the $a_i$'s to $(\overline{\mathbb{F}_p})^m$. This gives the desired $\overline{\mathbb{F}_p}$-structure on $W$. Since $A$ is Zariski-dense in $G$ and $L$, it follows that $W$ is fixed by $L$, proving the claim. This ends the proof.
\end{proof}

In an important paper \cite{BCH1994}, Bekka, Cowling and de la Harpe proved that lattices in semi-simple real Lie groups with trivial center are C*-simple. This was vastly extended by Poznansky \cite{P2008} to all linear groups in the form below. Combining Theorem \ref{thm:linear-groups-non-amenability-normalish} with Theorem \ref{thm:not-c-star-simple-implies-normalish}, we thus obtain a new proof of this result.

\begin{cor} \label{thm:linear-groups}
A discrete linear group is C*-simple if and only if its amenable radical is trivial.
\end{cor}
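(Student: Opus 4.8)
The plan is to handle the two directions separately, placing essentially all of the work on the two theorems already established. The forward implication---that a C*-simple group must have trivial amenable radical---is classical and was recalled in the introduction, so I would dispose of it by citing this well-known fact: a non-trivial amenable normal subgroup obstructs the simplicity of the reduced C*-algebra.

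For the converse, I would let $G$ be a discrete linear group with trivial amenable radical and aim to verify the two hypotheses of Theorem \ref{cor:not-c-star-simple-implies-normalish}. First, $G$ has no non-trivial finite normal subgroup: any finite group is amenable, so such a subgroup would lie inside the amenable radical and hence be trivial. Second, $G$ has no amenable normalish subgroup; this is exactly the conclusion of Theorem \ref{thm:linear-groups-non-amenability-normalish}, applied to $G$. Having confirmed both hypotheses, Theorem \ref{cor:not-c-star-simple-implies-normalish} immediately gives that $G$ is C*-simple, which completes the equivalence.

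In short, the corollary is a brief synthesis of Theorem \ref{thm:linear-groups-non-amenability-normalish} and Theorem \ref{cor:not-c-star-simple-implies-normalish}, so no genuinely new obstacle appears at this step. The substantive difficulty has been entirely absorbed into Theorem \ref{thm:linear-groups-non-amenability-normalish}, whose proof relies on the Tits alternative, manipulation of Zariski closures, and---in positive characteristic---a descent to matrices over $\overline{\mathbb{F}_p}$ via a Frobenius/Galois argument. The only delicate point remaining here is the passage from trivial amenable radical to the absence of finite normal subgroups, and that is routine since finiteness implies amenability.
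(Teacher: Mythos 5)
Your proposal is correct and follows exactly the route the paper intends: the corollary is stated as an immediate combination of Theorem \ref{thm:linear-groups-non-amenability-normalish} and Theorem \ref{cor:not-c-star-simple-implies-normalish}, with the forward direction being the classical fact and the finite-normal-subgroup hypothesis disposed of just as you do. No discrepancies.
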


\begin{rem} Another route for proving this fact is to use Theorem \ref{thm:c-star-simplicity-2} by exhibiting a topologically free boundary action. As in the proof of the Tits alternative \cite{Ti1972}, or as in \cite{BCH1994}, passing to a finite index subgroup if necessary, and looking at various linear representations, one can let the linear group $G$ act proximally and strongly irreducibly on projective spaces over local fields. The associated limit sets (i.e. the closure of the top eigendirections of all proximal elements) are $G$-boundaries and they are topologically free modulo the kernel of the projective representation.  One can then proceed in stages using the stability under group extensions (Theorem \ref{stability}) and induction on the dimension of the Zariski closure of $G$.
\end{rem}

\subsection{Amalgamated free products and Baumslag-Solitar groups} \label{sec:baumslag-solitar}
In \cite{DP2011}*{Theorem 2} de la Harpe and Pr\'eaux consider the C*-simplicity of certain amalgamated free products and HNN extensions including the Baumslag-Solitar groups $BS(n,m) = \langle a,t \mid t^{-1} a^m t = a^n \rangle$ with $|n| \neq |m|$ and $|n|,|m| \ge 2$. Their analysis implies that the action of these groups on the boundary of their Bass-Serre tree is a topologically free boundary action. Thus C*-simplicity follows from Theorem \ref{thm:c-star-simplicity-2}.
We note that the cyclic subgroup $\langle a \rangle \le BS(n,m)$ is amenable and normalish.

\subsection{Groups with few amenable subgroups}
In this subsection, we present another criterion, which implies C*-simplicity and applies to certain new family of groups.

\begin{thm} \label{thm:few-amenable-subgroups}
A discrete group with only countably many amenable subgroups is C*-simple if and only if its amenable radical is trivial.
\end{thm}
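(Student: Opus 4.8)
The plan is to deduce this from Theorem \ref{cor:not-c-star-simple-implies-normalish}, which states that a group with no non-trivial finite normal subgroup and no amenable normalish subgroup is C*-simple. The forward direction is automatic: C*-simplicity always forces the amenable radical to be trivial, as recalled in the introduction. So the content is in the converse, and the key observation should be that having only countably many amenable subgroups is an extremely strong finiteness constraint that I expect to kill \emph{all} the obstructions to C*-simplicity simultaneously.

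First I would dispose of finite normal subgroups. If $G$ has trivial amenable radical, then it has no non-trivial \emph{amenable} normal subgroup, and in particular no non-trivial finite normal subgroup (finite groups being amenable). So the hypothesis of Theorem \ref{cor:not-c-star-simple-implies-normalish} concerning finite normal subgroups is met for free under the standing assumption that the amenable radical is trivial. This means the entire problem reduces to ruling out amenable normalish subgroups.

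The heart of the argument is therefore to show that a group with only countably many amenable subgroups and trivial amenable radical cannot contain an amenable normalish subgroup $H$. Here is where I would use the countability hypothesis decisively. Suppose $H < G$ is amenable and normalish. For each finite subset $F \subset G$ write $H_F = \cap_{t \in F} tHt^{-1}$; by normalishness each $H_F$ is infinite, and each $H_F$ is amenable (being a subgroup of the amenable group $H$). The family of conjugates $\{ tHt^{-1} : t \in G\}$ consists of amenable subgroups, hence is countable by hypothesis; equivalently the orbit of $H$ under conjugation is countable, so the normalizer $N_G(H)$ has at most countable index is \emph{not} quite what I want --- rather, I expect to exploit that the lattice of intersections $H_F$ and their further conjugates all lie inside a countable collection of amenable subgroups, forcing a stabilization or an infinite descending/ascending phenomenon that produces an amenable subgroup which is actually \emph{normal}. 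Concretely, I would try to show that $\cap_{t \in G} t H t^{-1}$, or some closely related subgroup built from the $H_F$, is both non-trivial and normal in $G$, and that it inherits amenability, thereby contradicting triviality of the amenable radical. The countability is what lets me pass from the finitely-intersected $H_F$ (which are controlled by Lemma \ref{lem:finite-intersection-property-conjugates}-style reasoning) to a genuinely $G$-invariant object without the intersection collapsing to the trivial group.

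The main obstacle, and the step I would scrutinize most carefully, is exactly this passage from ``all finite intersections of conjugates are infinite'' to ``some non-trivial normal amenable subgroup exists.'' Normalishness only guarantees infinitude of \emph{finite} intersections, and in general the full intersection $\cap_{t \in G} tHt^{-1}$ can be trivial. The countability hypothesis must be leveraged to prevent this: because there are only countably many amenable subgroups, the map $F \mapsto H_F$ takes only countably many values, and I would expect to argue that the directed system of $H_F$ stabilizes along a cofinal chain, or that a diagonal/pigeonhole argument over a countable exhaustion $G = \bigcup_n F_n$ yields a common non-trivial element or a non-trivial normal core. Verifying that the resulting normal subgroup is both non-trivial and amenable --- rather than merely locally finite or otherwise only ``virtually'' amenable --- is where the real care is needed, and is the crux on which the whole converse rests.
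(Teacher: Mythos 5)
Your reduction has a genuine gap, and I think the chosen route cannot be completed as described. The easy direction and the disposal of finite normal subgroups are fine, but everything then rests on the claim that a group with only countably many amenable subgroups and trivial amenable radical has no amenable normalish subgroup --- and you explicitly leave the proof of that claim as a hoped-for ``stabilization or pigeonhole'' argument. Concretely, the step from ``every finite intersection $H_F=\cap_{t\in F}tHt^{-1}$ is infinite'' to ``the normal core $\cap_{t\in G}tHt^{-1}$ is non-trivial'' does not follow from countability of the family $\{H_F\}$: a descending chain of infinitely many distinct subgroups drawn from a countable family can perfectly well have trivial intersection (as $2^n\bZ$ inside $\bZ$ illustrates for the abstract phenomenon), so no diagonal argument over an exhaustion $G=\bigcup_n F_n$ will produce a common non-trivial element. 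Worse, the intermediate claim itself is suspect: Theorem \ref{cor:not-c-star-simple-implies-normalish} is only a sufficient condition, and the paper itself points out that the C*-simple Baumslag--Solitar groups $BS(n,m)$ contain the amenable normalish subgroup $\langle a\rangle$. So C*-simplicity is compatible with the existence of amenable normalish subgroups, and a proof strategy that insists on excluding them outright is aiming at something stronger than the theorem and quite possibly false under these hypotheses.

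The paper's actual argument uses the countability hypothesis topologically rather than lattice-theoretically. Assuming $G$ is not C*-simple, the action on $\fb$ is not topologically free, so some fixed-point set $(\fb)_s$ is non-empty and, by Lemma \ref{lem:frolik}, clopen. For $x\in(\fb)_s$ one considers $F_x=\cap_{t\in G_x}(\fb)_t$, the closed set of points fixed by the (amenable, by Lemma \ref{lem:fb-pt-stabilizer-amenable}) stabilizer $G_x$. Since there are only countably many amenable subgroups, $(\fb)_s\subset\cup_{x}F_x$ is a countable union of closed sets, and the Baire category theorem yields some $F_y$ with non-empty interior $U$. Then $G_y$ fixes $U$ pointwise; covering $\fb$ by finitely many translates $s_kU$ shows that $\cap_k s_kG_ys_k^{-1}$ acts trivially on $\fb$, hence lies in the amenable radical, while Lemma \ref{lem:finite-intersection-property-conjugates} shows this intersection is non-trivial --- the desired contradiction. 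The Baire category step, which converts ``countably many amenable subgroups'' into ``some stabilizer fixes an open set pointwise,'' is the idea missing from your proposal.
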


\begin{proof}
Suppose that the amenable radical of $G$ is trivial, but that $G$ is not C*-simple. Then by Theorem \ref{thm:c-star-simplicity-2}, the $G$-action on $\fb$ is faithful (see Proposition \ref{furman}) but not topologically free. Let $s \in G$ be such that the set of $s$-fixed points $Fix(s)$ has non-empty interior. We claim that there is $y \in \operatorname{Fix}(s)$ such that stabilizer $G_y$ fixes a non-empty open subset of $\fb$ pointwise.

For $x \in \fb$, let $F_x = \cap_{t \in G_x} \operatorname{Fix}(t)$ denote the set of points in $\fb$ fixed by every element in the stabilizer $G_x$. Note that $F_x$ is closed and contains $x$. By Lemma \ref{amen}, the stabilizer $G_x$ is amenable. Therefore, by the assumption that $G$ contains only countably many amenable subgroups, there is a countable sequence $(x_k) \in \fb$ such that  $\cup_{x \in \operatorname{Fix}(s)} F_x = \cup_k F_{x_k}$. Since $\cup_{x \in \operatorname{Fix}(s)} F_x$ contains $\operatorname{Fix}(s)$ and thus has non empty interior, the Baire category theorem implies that there is $y \in \operatorname{Fix}(s)$ such that $F_y$ has non-empty interior, say $U$. This means that $G_y$ fixes $U$ pointwise and proves the claim.

On the other hand, by compactness and minimality, there are $s_1,\ldots,s_n$  in $G$ such that $s_1U,\ldots,s_nU$ cover $\fb$. Notice that the elements in $s_k G_y s_k^{-1}$ fix every point in $s_k U$. Therefore, the elements in the intersection $\cap_k s_k G_y s_k^{-1}$ fix every point in $\fb$. But $G_y$ is normalish (see the proof of Theorem \ref{thm:not-c-star-simple-implies-normalish}). So this contradicts the faithfulness of the action and ends the proof.
\end{proof}

For a prime number $p$, a Tarski monster group of order $p$ is an infinite group with the property that every non-trivial subgroup is cyclic of order $p$. Tarski monster groups of order $p$ were first constructed in \cite{O1980} for $p > 10^{75}$ as a counterexample to the Day-von Neumann conjecture about the amenability of groups which do not contain free subgroups. In \cite{O1991}, torsion-free Tarski monster groups were constructed. These are infinite groups with the property that every non-trivial subgroup is cyclic of infinite order.

It was shown in \cite{KK2014}*{Corollary 6.6}, using an ad hoc argument, that Tarski monster groups are C*-simple. However, since both Tarski monster groups and torsion-free Tarski monster groups are finitely generated and have trivial amenable radicals, they satisfy the hypotheses of Theorem \ref{thm:few-amenable-subgroups}. Thus we obtain the following generalization of \cite{KK2014}*{Corollary 6.6}.

\begin{cor}
Tarski monster groups and torsion-free Tarski monster groups are C*-simple.
\end{cor}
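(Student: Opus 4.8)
The plan is to verify that Tarski monster groups and torsion-free Tarski monster groups satisfy both hypotheses of Theorem \ref{thm:few-amenable-subgroups}---that they have only countably many amenable subgroups and that their amenable radicals are trivial---after which C*-simplicity follows at once.

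First I would count the amenable subgroups. By definition, every proper non-trivial subgroup is cyclic: of order $p$ in the case of a Tarski monster group of order $p$, and isomorphic to $\bZ$ in the torsion-free case. Every such subgroup is amenable, being either finite or infinite cyclic. On the other hand, the whole group is \emph{not} amenable; indeed, this is the very reason these groups were introduced in \cite{O1980} and \cite{O1991}, as finitely generated non-amenable groups containing no non-abelian free subgroup. Consequently the amenable subgroups are precisely the trivial subgroup together with the proper cyclic subgroups. Since the group is finitely generated, hence countable, it contains only countably many cyclic subgroups, and the first hypothesis is satisfied.

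Next I would check that the amenable radical is trivial. For this I would invoke the fact that the groups constructed in \cite{O1980} and \cite{O1991} are simple, so that their only normal subgroups are the trivial subgroup and the whole group. The amenable radical is a normal amenable subgroup; since the whole group is non-amenable, the radical cannot equal $G$, and must therefore be trivial. Having verified both hypotheses, Theorem \ref{thm:few-amenable-subgroups} yields the conclusion.

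This is a structural bookkeeping argument rather than a substantial proof, so I anticipate no genuine obstacle. The only point requiring care is to extract the needed properties---non-amenability, simplicity, and the classification of proper subgroups as cyclic---from the specific constructions of Ol'shanskii cited above, all of which are standard features of these examples.
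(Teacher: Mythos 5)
Your proposal is correct and takes essentially the same route as the paper, which deduces the corollary from Theorem \ref{thm:few-amenable-subgroups} by observing that these groups are finitely generated (hence countable, with all proper subgroups cyclic and therefore only countably many amenable subgroups) and have trivial amenable radical. The only detail you supply beyond the paper's one-line justification is the explicit appeal to the simplicity and non-amenability of Ol'shanskii's constructions to verify triviality of the amenable radical, which is a standard property of these examples and entirely adequate.
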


The free Burnside group of rank $m$ and exponent $n$, written $B(m,n)$ is an infinite group that is, in a certain specific sense, the ``largest'' group with $m$ generators such that every element in the group has order $n$. It was recently shown in \cite{OO2014} that $B(m,n)$ is C*-simple for $m \geq 2$ and $n$ odd and sufficiently large.

It was shown in \cite{I1994} that for $n$ sufficiently large, every non-cyclic subgroup of the free Burnside group $B(m,n)$ contains a subgroup isomorphic to $B(2,n)$. By \cite{A1983}, if $n$ is odd, then $B(2,n)$ is non-amenable. Thus for $m \geq 2$ and $n$ odd and sufficiently large, $B(m,n)$ satisfies the hypotheses of Theorem \ref{thm:few-amenable-subgroups}, and we recover the following result from \cite{OO2014}.

\begin{cor}
For $m \geq 2$ and $n$ odd and sufficiently large, the free Burnside group $B(m,n)$ is C*-simple.
\end{cor}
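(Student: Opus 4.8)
The plan is to verify the two hypotheses of Theorem~\ref{thm:few-amenable-subgroups} for $G = B(m,n)$ with $m \geq 2$ and $n$ odd and sufficiently large, namely that $G$ has only countably many amenable subgroups and that its amenable radical is trivial. C*-simplicity then follows immediately from that theorem.

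First I would pin down the amenable subgroups of $G$. Since $G$ is finitely generated it is countable. Using the structural result of \cite{I1994}, for $n$ sufficiently large every non-cyclic subgroup of $G$ contains an isomorphic copy of $B(2,n)$, and by \cite{A1983} the group $B(2,n)$ is non-amenable when $n$ is odd. As amenability passes to subgroups, no non-cyclic subgroup of $G$ can be amenable; hence every amenable subgroup of $G$ is cyclic. Because every element of $G$ has order dividing $n$, each such cyclic subgroup is finite, and there are only countably many finite subsets---hence only countably many cyclic subgroups---of the countable group $G$. This establishes that $G$ has only countably many amenable subgroups.

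Next I would check that the amenable radical is trivial. Being amenable, the radical is one of the cyclic (hence finite) subgroups identified above, so it suffices to rule out non-trivial finite normal subgroups. For this I would invoke the fact that for $n$ odd and sufficiently large $G$ is infinite with the centralizer of every non-trivial element finite, so that every non-trivial conjugacy class is infinite. A finite normal subgroup is a finite union of conjugacy classes, and thus can only be trivial. Therefore the amenable radical of $G$ is trivial, and the hypotheses of Theorem~\ref{thm:few-amenable-subgroups} are met.

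The substantive content of the argument is imported wholesale from the deep structure theory of free Burnside groups---the theorems of Ivanov and Adian quoted above, together with the basic facts about infinitude and centralizers for large odd exponent. Once these external inputs are granted, the remaining work---passing amenability to subgroups, counting finite subsets, and the conjugacy-class bookkeeping for the amenable radical---is entirely routine, so I expect no internal obstacle: the whole difficulty is concentrated in, and delegated to, the cited Burnside-group results.
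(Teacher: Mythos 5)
Your argument is correct and follows the paper's proof essentially verbatim: both deduce from Ivanov's theorem and Adian's non-amenability of $B(2,n)$ that every amenable subgroup is cyclic (hence that there are only countably many), and then apply Theorem~\ref{thm:few-amenable-subgroups}. The only difference is that you spell out the triviality of the amenable radical via the infinitude of non-trivial conjugacy classes, a step the paper leaves implicit; that detail is accurate and harmless.
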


\section{Reduced crossed products and  point stabilizers}\label{sec:cross-products}

In this section, we further investigate the simplicity of reduced crossed product C*-algebras over C*-simple groups, and collect further information about the point stabilizers of boundary actions.

\subsection{Simplicity of reduced crossed products}\label{sec:cross}

Let $A$ be a {\em $G$-C*-algebra}, i.e. a C*-algebra equipped with a $G$-action. The reduced crossed product $\rcp{A}{G}$ is then also a $G$-C*-algebra with respect to the action of $G$ by conjugation $(\ref{conj-action})$. Recall the canonical conditional expectation $E : \rcp{A}{G} \to A$ defined in $(\ref{can-cond})$.

The following theorem answers a question of de la Harpe and Skandalis and generalizes \cite{DS1986}*{Theorem I}.

\begin{thm} \label{thm:simplicity-crossed-products}
Let $G$ be a discrete C*-simple group. Then for any unital $G$-C*-algebra $A$ having no non-trivial $G$-invariant closed ideal, the reduced crossed product $\rcp{A}{G}$ is simple.
\end{thm}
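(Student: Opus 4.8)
The plan is to show that every nonzero closed two-sided ideal $J$ of $\rcp{A}{G}$ contains the unit. Since $A$ is unital, each $\lambda_s$ lies in $\rcp{A}{G}$, so $J$ is invariant under the conjugations $\operatorname{Ad}\lambda_s$; consequently $J\cap A$ is a $G$-invariant closed ideal of $A$. By hypothesis it is either $\{0\}$ or $A$, and in the latter case $1\in J$ and we are done. So the theorem reduces to proving that $J\cap A\neq\{0\}$ whenever $J\neq\{0\}$. Let $E\colon\rcp{A}{G}\to A$ be the canonical (faithful) conditional expectation. Because $J$ is closed, convex and $\operatorname{Ad}\lambda_G$-invariant, the operator-coefficient Powers-type averaging property
\begin{equation*}
E(x)\in\cconv\{\lambda_s x\lambda_s^*:s\in G\}\qquad(x\in\rcp{A}{G})
\tag{$\ast$}
\end{equation*}
would immediately give $E(J)\subseteq J$, hence $E(J)\subseteq J\cap A$; applying $(\ast)$ to $x=b^*b\in J$ for $0\neq b\in J$ and using faithfulness of $E$ then yields $E(b^*b)\in(J\cap A)\setminus\{0\}$. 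Thus the whole statement comes down to $(\ast)$.

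To prove $(\ast)$ I would use C*-simplicity in its boundary form. Since $G$ is C*-simple, Theorem \ref{thm:c-star-simplicity} together with Proposition \ref{prop:top-free-iff-free} shows that $G$ acts \emph{freely} on its universal boundary $\fb$, and $C(\fb)=I_G(\bC)$ is $G$-injective. Using $G$-injectivity, the unital inclusion $\bC\hookrightarrow A$ extends to a $G$-equivariant unital completely positive map $\Phi\colon A\to C(\fb)$ with $\Phi(1)=1$, which induces a unital completely positive map $\widetilde\Phi\colon\rcp{A}{G}\to\rcp{C(\fb)}{G}$ satisfying $\widetilde\Phi(a\lambda_s)=\Phi(a)\lambda_s$. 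This map fixes $\ca_r(G)$ pointwise and satisfies $E_{\fb}\circ\widetilde\Phi=\Phi\circ E$, where $E_{\fb}\colon\rcp{C(\fb)}{G}\to C(\fb)$ is the canonical expectation; combined with the inclusion $\rcp{C(\fb)}{G}\subseteq I(\ca_r(G))$ recalled after Proposition \ref{prop:cross-simplicity}, these maps form the bridge between the averaging problem in $\rcp{A}{G}$ and the dynamics of the free, minimal, strongly proximal action $G\acts\fb$.

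The heart of the proof, and the step I expect to be hardest, is establishing $(\ast)$ itself. One cannot simply transport a norm estimate from $\rcp{C(\fb)}{G}$ back through $\widetilde\Phi$, since a completely positive map need not preserve norms. Instead I would argue by Hahn--Banach separation: if $E(x)\notin K:=\cconv\{\lambda_s x\lambda_s^*:s\in G\}$ for some $x=x^*$, then, $K$ being closed, convex and $\operatorname{Ad}\lambda_G$-invariant, a self-adjoint functional $\phi$ separates $E(x)$ from $K$, giving a uniform bound $\sup_{s}\phi(\lambda_s x\lambda_s^*)<\phi(E(x))$. The remaining task is to manufacture, from the strong proximality and minimality of $G\acts\fb$ (Remark \ref{rem:n-proximality}), a net of convex averages of the conjugates $\lambda_s x\lambda_s^*$ converging to $E(x)$ in the appropriate weak sense, with the freeness of the action (equivalently, the triviality of the relevant point stabilizers forced by C*-simplicity) ruling out any obstruction and thereby contradicting the separating bound. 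This boundary/fixed-point analysis is where C*-simplicity is genuinely used; the reductions in the first two paragraphs are formal, and once $(\ast)$ is in hand the argument closes.
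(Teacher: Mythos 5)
Your reduction in the first two paragraphs is correct and standard: since $J$ is $\operatorname{Ad}\lambda_G$-invariant and $E$ is faithful, the averaging property $(\ast)$ would give $0\neq E(b^*b)\in J\cap A$, and the hypothesis on $A$ finishes the argument. The problem is that $(\ast)$ is precisely the part you do not prove, and it is not a technical lemma one can wave through --- it is the (operator-coefficient) Powers averaging property for $G$, a statement at least as deep as the theorem itself. Your third paragraph correctly identifies the Hahn--Banach reduction (it suffices to rule out a separating self-adjoint functional), but then the actual content --- producing convex combinations $\sum_i c_i\lambda_{s_i}(a\lambda_t)\lambda_{s_i}^*=\sum_i c_i (s_i\cdot a)\lambda_{s_i t s_i^{-1}}$ whose norm (or value against an arbitrary separating functional) tends to $0$ for $t\neq e$ --- is replaced by the phrase ``with the freeness of the action ruling out any obstruction.'' Freeness of $G\acts\fb$ does not by itself yield such averages; already for $A=\bC$ this is Powers' property for $G$, whose derivation from C*-simplicity was not known at the time of this paper (it was established later, by quite nontrivial boundary-measure arguments), and the version with coefficients in $A$ requires additional care because conjugation also moves the coefficient $s_i\cdot a$. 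As written, the proposal assumes the hard step.

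The paper avoids $(\ast)$ entirely and takes a different, more local route. Given a non-trivial closed ideal $I\subset\rcp{A}{G}$, it first shows (Lemma \ref{lem:ideal-gen-nontriv-ideal}) that the ideal $J$ generated by $I$ in $\rcp{(A\otimes C(\fb))}{G}$ is still proper; this uses a UCP extension of a representation with kernel $I$ and the rigidity of $C(\fb)$ under such maps. Then (Lemma \ref{lem:ideal-inclusions}) it sandwiches $J$ between $J_A\rtimes_{\mathrm{r}}G$ and $J_A\mathbin{\bar{\rtimes}}_{\mathrm{r}}G$ with $J_A=J\cap(A\otimes C(\fb))$: for each $x\in\fb$ one builds a UCP map $\Phi_x$ killing $J$ whose restriction to $C(\fb)$ is evaluation at $x$, and the \emph{freeness} of the action enters only to show $\Phi_x(\lambda_s)=0$ for $s\neq e$ via a bump function $h$ with $\operatorname{supp}(h)\cap s\operatorname{supp}(h)=\emptyset$ --- a pointwise statement, much weaker than norm-averaging $\lambda_s$ to zero. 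From $I\subset I_A\mathbin{\bar{\rtimes}}_{\mathrm{r}}G$ with $I_A=J\cap A$ a proper $G$-invariant ideal of $A$, the hypothesis forces $I_A=\{0\}$ and hence $I=\{0\}$. If you want to salvage your approach, you must supply a genuine proof of $(\ast)$; otherwise the argument has a gap exactly where you predicted it would be hardest.
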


The proof of Theorem \ref{thm:simplicity-crossed-products} is divided into several steps.

\begin{lem} \label{lem:ideal-gen-nontriv-ideal}
Let $A$ be a unital $G$-C*-algebra and let $X$ be a $G$-boundary. Then for any non-trivial closed ideal $I$ of $\rcp{A}{G}$, the ideal $J$ of $\rcp{(A \otimes C(X))}{G}$ generated by $I$ is non-trivial.
\end{lem}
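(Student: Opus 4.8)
The plan is to show that $J$ is proper (that $J\neq 0$ is immediate from $I\subseteq J$ and $I\neq 0$); since $J$ is the ideal \emph{generated} by $I$, this is equivalent to producing a nonzero $*$-representation of $\rcp{(A\otimes C(X))}{G}$ that annihilates $I$. First I would reduce to the case $X=\fb$: universality of the Furstenberg boundary gives a $G$-equivariant surjection $\fb\to X$, hence a $G$-equivariant inclusion $C(X)\hookrightarrow C(\fb)$ and thus a $G$-equivariant inclusion $A\otimes C(X)\hookrightarrow A\otimes C(\fb)$. Since the canonical conditional expectations are faithful and compatible, this induces an inclusion $\rcp{(A\otimes C(X))}{G}\hookrightarrow \rcp{(A\otimes C(\fb))}{G}=:B_{\fb}$ under which $J$ lands inside the ideal $J_{\fb}$ generated by $I$ in $B_{\fb}$. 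As $1_{B}=1_{B_{\fb}}$, it suffices to prove $J_{\fb}\neq B_{\fb}$.

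Next I would manufacture a representation killing $I$ on the base. Because $I$ is a proper ideal of $\rcp{A}{G}$, the quotient $(\rcp{A}{G})/I$ is nonzero, so there is a state on $\rcp{A}{G}$ vanishing on $I$; its GNS construction yields a nonzero representation $\pi_0$ of $\rcp{A}{G}$ with $I\subseteq\ker\pi_0$. Writing $\rho=\pi_0|_A$ and $u_g=\pi_0(\lambda_g)$, this is a covariant representation of $(A,G)$ that factors through the reduced crossed product.

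The heart of the argument is to extend $\pi_0$ to a representation of the larger reduced crossed product $B_{\fb}$ without destroying the annihilation of $I$. Here I would use the tower $\rcp{A}{G}\subseteq B_{\fb}\subseteq I(\rcp{A}{G})$, the coefficient version of the inclusion already exploited for Proposition \ref{prop:cross-simplicity} (via $C(\fb)=I_G(\bC)$ and Hamana's theorem), together with the $G$-injectivity of $C(\fb)$. Concretely, I would try to represent $C(\fb)$ by a $G$-equivariant unital $*$-homomorphism into the commutant of $\rho(A)$, equivariant for the conjugation action $\mathrm{Ad}\,u$; combined with $(\rho,u)$ this produces a covariant representation of $(A\otimes C(\fb),G)$ whose restriction to $\rcp{A}{G}$ is exactly $\pi_0$, hence which annihilates $I$ and is nonzero on $1$. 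Its kernel is then a proper ideal of $B_{\fb}$ containing $I$, so $J_{\fb}\neq B_{\fb}$.

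The hard part will be precisely this extension, and it is where the boundary hypothesis is indispensable. Two things must be arranged at once: the equivariant homomorphism out of $C(\fb)$ into the commutant of $\rho(A)$ must exist, and the resulting covariant pair must descend to the reduced (not merely the full) crossed product, so that it genuinely defines a representation of the concrete C*-algebra $B_{\fb}$ whose kernel controls the generated ideal. I expect the injectivity and rigidity of the Furstenberg boundary, encoded in $C(\fb)=I_G(\bC)$ and in the embedding $B_{\fb}\subseteq I(\rcp{A}{G})$, to furnish both, through the canonical $G$-equivariant conditional expectation onto $C(\fb)$ and the compatibility of the expectations $E^{B_{\fb}}$ and $E^{\rcp{A}{G}}$. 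It is worth stressing why the naive route is insufficient: extending $\pi_0$ merely as a unital completely positive map by Arveson's theorem places $\rcp{A}{G}$ in the multiplicative domain, but this only permits factoring elements of $I$ out on a single side and hence does \emph{not} annihilate $\overline{B_{\fb}\,I\,B_{\fb}}$. Upgrading the extension to an honest $*$-homomorphism of $B_{\fb}$ is the real content of the lemma.
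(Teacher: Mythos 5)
You have located the difficulty in the right place, but you have not crossed it, and the route you sketch for crossing it does not work. Your plan requires a $G$-equivariant unital *-homomorphism $\theta\colon C(\fb)\to\rho(A)'\subset\B(H)$, equivariant for $\mathrm{Ad}\,u$, living on the \emph{same} Hilbert space as $\pi_0$ (otherwise the restriction to $\rcp{A}{G}$ will not factor through $\pi_0$ and there is no reason the kernel contains $I$). Nothing you invoke produces this: the $G$-injectivity of $C(\fb)=I_G(\bC)$ and Hamana's theorem give $G$-equivariant unital completely positive maps \emph{into} $C(\fb)$, and the rigidity statement \cite{KK2014}*{Theorem 3.12} concerns equivariant self-maps of $C(\fb)$; neither manufactures a *-homomorphism \emph{out of} $C(\fb)$ into a prescribed commutant on a prescribed Hilbert space. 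If you obtain $\theta$ only as a completely positive map, you are back exactly at the obstruction you yourself flagged. So what you call ``the real content of the lemma'' is left as an expectation rather than proved, and the expectation is pointed in the wrong direction.

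The paper's proof resolves this differently, and instructively: it begins with precisely the ``naive'' Arveson extension you dismiss, and then repairs it by passing to a quotient of the \emph{image} algebra rather than by improving the extension on $H$. Starting from $\pi$ with $\ker\pi=I$ and a u.c.p.\ extension $\bar\pi$ of $\pi$ to $\rcp{(A\otimes C(X))}{G}$, the boundary property of $X$ (used directly --- no reduction to $\fb$ is needed) shows that $\bar\pi$ is completely isometric on $C(X)$, so there is a *-homomorphism $Q$ from $\ca(\bar\pi(C(X)))$ onto $C(X)$ with $Q\circ\bar\pi=\mathrm{id}_{C(X)}$. Setting $K=\ker Q$, $D=\ca\bigl(\bar\pi\bigl(\rcp{(A\otimes C(X))}{G}\bigr)\bigr)$ and $L=\overline{K\cdot\pi(\rcp{A}{G})}$, one checks that $L\cap\ca(\bar\pi(C(X)))=K$, so $L$ is a proper ideal of $D$; composing $\bar\pi$ with the quotient map $D\to D/L$ places $C(X)$, in addition to $\rcp{A}{G}$, inside the multiplicative domain, and the resulting map is therefore an honest unital *-homomorphism of $\rcp{(A\otimes C(X))}{G}$ vanishing on $I$, hence on $J$, which shows $J$ is proper. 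This quotient construction is the ingredient missing from your argument; the strategy of building a covariant representation on $H$ itself should be abandoned.
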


\begin{proof}
We proceed as in the proof of $(2) \Rightarrow (1)$ in \cite{KK2014}*{Theorem 6.2}.  Let $\pi : \rcp{A}{G} \to \IB(\cH)$ be a *-representation such that $\ker\pi=I$. We extend $\pi$ to a unital completely positive map $\bar{\pi}$ from $\rcp{(A\otimes C(X))}{G}$ into $\B(\cH)$.

Note that $A\rtimes_{\mathrm{r}} G$ is contained in the multiplicative domain of $\bar{\pi}$, that is $\bar{\pi}(afb)=\pi(a)\bar{\pi}(f)\pi(b)$ for every $a,b\in \rcp{A}{G}$ and $f\in C(X)$ (see \cite{bo}*{Proposition 1.5.6}). In particular, $\pi(A)$ and $\bar{\pi}(C(X))$ commute.

 If $\bar{\pi}$ were multiplicative on $C(X)$ we would be done, because then $\bar{\pi}$ would be a *-homomorphism and its kernel would contain $I$. This may not be the case, but since $X$ is a $G$-boundary, it is easy to check that $\bar{\pi}$ is completely isometric on $C(X)$. Since $\bar{\pi}$ extends to a completely isometric map on the injective envelope of $C(X)$, it follows as in the proof of  \cite{CE1977}*{Thm 3.1} that there is a *-homomorphism $Q$ from the C*-subalgebra $\ca(\bar{\pi}(C(X)))$  generated by $\bar{\pi}(C(X))$ onto $C(X)$ such that $Q\circ\bar{\pi}=\operatorname{id}_{C(X)}$. The C*-subalgebra $\ca(\bar{\pi}(C(X)))$ is a $G$-C*-algebra with the conjugation $G$-action through $\pi$, and the ideal $K=\ker Q$ is $G$-invariant.

We consider
\[
D=\ca\Bigl(\bar{\pi}\bigl(\rcp{(A\otimes C(X))}{G}\bigr)\Bigr)
=\overline{\ca(\bar{\pi}(C(X)))\cdot\pi(\rcp{A}{G})}
\]
and the ideal $L$ of $D$ defined by
\[
L= \overline{K\cdot \pi(A\rtimes_{\mathrm{r}} G)}.
\]
An element $d\in D$ belongs to $L$ if and only if $e_id \to d$ for an approximate unit $(e_i)$ of $K$. This implies that $L\cap \ca(\bar{\pi}(C(X))) = K$. Let $Q$ now denote the quotient map from $D$ onto $D/L$. Then $Q\circ\bar{\pi}$ is a *-homomorphism, since it is a unital completely positive map which is multiplicative on $A\otimes C(X)$ and $G$-equivariant. The ideal $\ker(Q\circ\bar{\pi})$ is proper and contains $I$.
\end{proof}

Let $B$ be a $G$-C*-algebra and let $K$ be a $G$-invariant closed ideal of $B$. Then, letting $K\mathbin{\bar{\rtimes}}_{\mathrm{r}} G$ denote the kernel of the map $B\rtimes_{\mathrm{r}} G \to (B/K)\rtimes_{\mathrm{r}} G$,
\[
K\mathbin{\bar{\rtimes}}_{\mathrm{r}} G = \{ b \in B\rtimes_{\mathrm{r}} G : E(b\lambda_s^*)\in K\,\forall s\in G\},
\]
and $K\mathbin{\bar{\rtimes}}_{\mathrm{r}} G$ is a closed ideal in $\rcp{B}{G}$ which contains $\rcp{K}{G}$. (In fact, these two ideals coincide whenever $G$ is exact.) The following lemma is inspired by \cite{AS1994}.

\begin{lem} \label{lem:ideal-inclusions}
Let $G$ be discrete C*-simple group with universal G-boundary $\fb$. Let $A$ be a unital $G$-C*-algebra and let $J$ be a closed ideal in $\rcp{(A\otimes C(\fb))}{G}$. Then setting $J_A= J \cap (A\otimes C(\fb))$,
\[
J_A\rtimes_{\mathrm{r}} G \subset J \subset J_A\mathbin{\bar{\rtimes}}_{\mathrm{r}} G.
\]
\end{lem}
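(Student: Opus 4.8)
The plan is to establish the two inclusions separately; the left one is formal and the right one carries all the content. Write $B = A \otimes C(\fb)$ and let $E : \rcp{B}{G} \to B$ be the canonical conditional expectation. First observe that $J_A = J \cap B$ is a $G$-invariant closed ideal of $B$: conjugation by each unitary $\lambda_s$ preserves $J$ (an ideal) and $B$ (a $G$-invariant subalgebra), hence preserves $J_A$, and on $B$ this conjugation is the given $G$-action. Thus $\rcp{J_A}{G}$ and $J_A\mathbin{\bar{\rtimes}}_{\mathrm{r}} G$ are both defined. For the inclusion $\rcp{J_A}{G} \subset J$, note that for $a \in J_A \subset J$ and $s \in G$ the element $a\lambda_s$ lies in $J$ because $J$ is an ideal; taking closed linear spans gives $\rcp{J_A}{G} \subset J$.

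For the inclusion $J \subset J_A\mathbin{\bar{\rtimes}}_{\mathrm{r}} G$, the description of $J_A\mathbin{\bar{\rtimes}}_{\mathrm{r}} G$ recalled before the lemma reduces the claim to showing $E(c) \in J_A$ for every $c \in J$ (apply this to $c = b\lambda_s^*$ for $b \in J$ and $s \in G$). Since $E(c) \in B$ and $J_A = J \cap B$, it suffices to prove $E(c) \in J$. The crucial input is that $G$ acts \emph{freely} on $\fb$: as $G$ is C*-simple, Proposition \ref{prop:c-star-simple-implies-non-amen-stabilizer} and Lemma \ref{lem:fb-pt-stabilizer-amenable} show the action on $\fb$ is topologically free, hence free by Proposition \ref{prop:top-free-iff-free}. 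Using freeness together with the fact that $\fb$ is extremally disconnected, I would show that for any finite set $S \subset G \setminus \{e\}$ there is a finite clopen partition $\{P_1,\dots,P_m\}$ of $\fb$ satisfying $P_i \cap sP_i = \emptyset$ for all $i$ and all $s \in S$: for each $x$ and each $s$ freeness gives $sx \ne x$, so $x$ and $sx$ can be separated by disjoint clopen sets; compactness yields a finite clopen cover with this separation property, which one disjointifies and then refines over the finitely many $s \in S$.

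Setting $p_i = 1_A \otimes \chi_{P_i}$, these are central projections of $B$ with $\sum_i p_i = 1$ and $p_i\,\alpha_s(p_i) = 1_A \otimes \chi_{P_i \cap sP_i} = 0$ for $s \in S$. Consider the pinching map $\Phi_S(c) = \sum_i p_i c p_i$, which is unital completely positive, and which satisfies $\Phi_S(J) \subset J$ since each $p_i \in \rcp{B}{G}$ and $J$ is an ideal. A short computation using $\lambda_s p_i = \alpha_s(p_i)\lambda_s$ and centrality of the $p_i$ gives $\Phi_S(a\lambda_s) = 0$ for $s \in S$ and $\Phi_S(a) = a$ for $a \in B$, so $\Phi_S$ agrees with $E$ on any element supported in $S \cup \{e\}$. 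Given $c \in J$, choose finitely supported $c_n \to c$ and partitions adapted to $S_n = \operatorname{supp}(c_n) \setminus \{e\}$; then $\Phi_{S_n}(c_n) = E(c_n)$, and contractivity of $\Phi_{S_n}$ and $E$ yields $\|\Phi_{S_n}(c) - E(c)\| \le 2\|c - c_n\| \to 0$. As each $\Phi_{S_n}(c) \in J$ and $J$ is closed, $E(c) \in J$, as required. The single real obstacle is this right-hand inclusion, and its heart is the passage from C*-simplicity to freeness of the $\fb$-action, which is exactly what makes the separating clopen partitions --- and hence the $J$-preserving approximation of $E$ --- available; everything else is the bookkeeping of the pinching identity.
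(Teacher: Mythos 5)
Your argument is correct, and for the substantive inclusion $J \subset J_A\mathbin{\bar{\rtimes}}_{\mathrm{r}} G$ it takes a genuinely different route from the paper's, though both rest on the same key input: C*-simplicity forces the $G$-action on $\fb$ to be free, so for $s \ne e$ one can find clopen sets $U$ with $U \cap sU = \emptyset$. You use this \emph{globally}: you build finite clopen partitions of $\fb$ adapted to a finite set $S \subset G \setminus \{e\}$ and form the pinching maps $\Phi_S(c) = \sum_i p_i c p_i$, which preserve $J$ and approximate $E$ in point-norm on $J$; this yields the stronger conclusion $E(J) \subset J$ (hence $E(J) \subset J \cap (A \otimes C(\fb)) = J_A$) by a purely approximation-theoretic argument in the spirit of Archbold--Spielberg --- the very reference the paper says the lemma is ``inspired by.'' The paper instead works \emph{pointwise} over $x \in \fb$: for each $x$ with $J_A^x \ne A$ it uses Arveson's extension theorem to produce a unital completely positive map $\Phi_x$ on $\rcp{(A\otimes C(\fb))}{G}$ vanishing on $J$, shows $\Phi_x = \Phi_x \circ E$ via the multiplicative-domain trick with a single function $h$ peaking at $x$ and satisfying $\operatorname{supp}(h) \cap s\operatorname{supp}(h) = \emptyset$, and then deduces $E(J) \subset J_A$ from the faithfulness of the family of representations $\{\pi_x\}$ of $(A\otimes C(\fb))/J_A$. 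Your version avoids Arveson extension and the faithful-family step at the modest cost of constructing the separating partitions (for which the extremal disconnectedness of $\fb$, or just its zero-dimensionality, supplies the needed clopen basis, and the disjointification is harmless since the separation property passes to subsets); the paper's version is local and matches the UCP-extension technique it reuses in Lemma \ref{lem:ideal-gen-nontriv-ideal} and Proposition \ref{prop:G-not-c-star-simple-implies-G-x-not-c-star-simple}. The left-hand inclusion and the preliminary observation that $J_A$ is a $G$-invariant closed ideal are handled the same (trivial) way in both.
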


\begin{proof}
For $x\in \fb$, let $J_A^x=(\operatorname{id}_A\otimes\delta_x)(J_A)$, where $\operatorname{id}_A\otimes\delta_x$ is the homomorphism from $A\otimes C(\fb)$ onto $A$ given by evaluation at $x$. Then $J_A^x$ is a (potentially non-proper) ideal of $A$.

Let $\pi_x$ denote the induced homomorphism from $(A\otimes C(\fb))/J_A$ onto $A/J_A^x$. Note that any irreducible representation of $(A\otimes C(\fb))/J_A$ factors through some $\pi_x$, and hence the set $\{\pi_x \mid x \in \fb\}$ is a faithful family of representations.

Let $x\in \fb$ be such that $J_A^x \neq A$. Consider the composition of the map
\begin{align*}
J+A\otimes C(\fb) &\to (J+A\otimes C(\fb))/J \\
&\cong (A\otimes C(\fb))/J_A \stackrel{\pi_x}{\to} A/J_A^x
\end{align*}
with any faithful representation of $A/J_A^x$ into $\B(\cH)$. By Arveson's extension theorem, this extends to a unital completely positive map $\Phi_x$ from $\rcp{(A\otimes C(\fb))}{G}$ into $\B(\cH)$.

We claim that $\Phi_x=\Phi_x\circ E$, where $E$ is the canonical conditional expectation onto $A\otimes C(\fb)$. Since $A\otimes C(\fb)$ is contained in the multiplicative domain of $\Phi_x$ and $\Phi_x(f)=f(x)$ for $f\in C(\fb)$, it suffices to show that $\Phi_x(\lambda_s)=0$ for every $s\in G\setminus\{e\}$. By Theorem \ref{thm:c-star-simplicity-2}, $G$ acts freely on $\fb$. Hence there is $h \in C(\fb)$ such that $h(x)=1$ and $\operatorname{supp}(h)\cap s\operatorname{supp}(h)=\emptyset$. Then $\Phi_x(\lambda_s)=\Phi_x(h\lambda_sh) = \Phi_x(h(sh)\lambda_s) =0$, which proves the claim.

Thus, we see that $\Phi_x(E(J))=\Phi_x(J)=0$ for all $x\in \fb$. Since $E(J)\subset A\otimes C(\fb)$, one obtains $E(J)\subset J_A$, or equivalently that $J\subset J_A\mathbin{\bar{\rtimes}}_{\mathrm{r}} G$. The other inclusion, $\rcp{J_A}{G}\subset J$ is obvious.
\end{proof}

\begin{proof}[Proof of Theorem \ref{thm:simplicity-crossed-products}]
Let $A$ be a unital $G$-C*-algebra and let $I$ be a non-trivial closed  ideal in $\rcp{A}{G}$. By Lemma \ref{lem:ideal-gen-nontriv-ideal}, the ideal $J$ of $\rcp{(A\otimes C(\partial_FG))}{G}$ generated by $I$ is non-trivial, and by Lemma \ref{lem:ideal-inclusions}, for $J_A=J\cap (A\otimes C(\partial_FG))$ we have $J\subset J_A\mathbin{\bar{\rtimes}}_r G$.
It follows that $I_A=J \cap A$ is a proper ideal such that $I\subset I_A\mathbin{\bar{\rtimes}}_r G$.
By the assumption that $A$ has no non-trivial $G$-invariant closed ideal, it follows that $I_A=\{0\}$, and hence that $I=\{0\}$.
\end{proof}

\begin{rem}
Theorem \ref{thm:simplicity-crossed-products} applies in particular when $A=C(X)$ for a minimal compact $G$-space $X$.
\end{rem}

The next result was first established in \cite{KK2014}.

\begin{cor} Let $G$ be a discrete group and $X$ a $G$-boundary. Then $G$ is C*-simple if and only if $C(X) \rtimes_r G$ is simple.
\end{cor}

\begin{proof}
The ``only if'' direction follows from the previous theorem. The ``if'' direction is immediate from Lemma \ref{lem:ideal-gen-nontriv-ideal}.
\end{proof}

\subsection{Stabilizer subgroups} \label{sec:stabilizer-subgroups}

Let $G$ be a discrete group and let $X$ be a $G$-boundary. In this subsection, we relate the C*-simplicity of $G$ to the structure of stabilizer subgroups $G_x$ for $x \in X$. If $X$ is topologically free, then $G$ is C*-simple by Theorem \ref{thm:c-star-simplicity-2}. We note that the converse is not true, even if we assume the $G$-action on $X$ to be faithful (for example, Thompson's group $V$ is C*-simple but its action on the circle is not topologically free \cite{LM2016}). However the following holds:

\begin{prop} Let $G$ be a discrete group and let $X$ be a $G$-boundary. Assume there is $x \in X$ such that the point stabilizer $G_x$ is amenable. Then the following are equivalent:
\begin{enumerate}
\item $G$ is C*-simple,
\item $X$ is topologically free.
\end{enumerate}
\end{prop}

\begin{proof} If $X$ is not topologically free, then Proposition \ref{prop:not-weakly-contained-quasi-regular} shows that $\lambda_{G}$ is not weakly contained in $\lambda_{G/G_x}$. However, $G_x$ is amenable, and hence $G$ is not C*-simple (see Remark \ref{weak-cont-def}). \end{proof}

\begin{rem} The implication $(1) \implies (2)$ holds more generally when $X$ is assumed only to be a minimal compact $G$-space. To see this, note that the amenability of $G_x$ allows us to define a *-representation $\pi_x : C(X) \rtimes_r G \to B(\ell^2(G/G_x))$ by
\begin{align*}
\pi_x(\lambda_s) = \lambda_{G/G_x}(s), &\quad s \in G, \\
\pi_x(f) \delta_p = f(px) \delta_p, &\quad f \in C(X),\ p \in G/G_x.
\end{align*}
This representation cannot be faithful if $X$ is not topologically free, because there will be $s \in G\setminus\{e\}$ and $x \in X$ such that $s$ fixes a neighborhood of $x$, and $\pi_x(\lambda_s f)=\pi_x(f)$ for any function $f \in C(X)$ supported on the fixed points of $s$. However, this contradicts the simplicity of $C(X) \rtimes_r G$ obtained from Theorem \ref{thm:simplicity-crossed-products}.
\end{rem}

When the $G$-boundary $X$ is topologically free, then $G_x$ may or may not be amenable. For example, the action of $\mathrm{PSL}_d(\mathbb{Z})$ on $\mathbb{P}(\IR^d)$ is a topologically free boundary action with $G_x=\{e\}$ for all but countably many $x$, but with $G_x$ non-amenable for some points $x$, when $d \geq 3$. If we assume that $X$ is not topologically free (which is typically the case in practice when one does not know if $G$ is C*-simple), then this proposition shows that if $G$ is C*-simple, then all stabilizer subgroups are non-amenable. The converse assertion is not true, even if we assume that the $G$-action is faithful on $X$ (this relies on Le Boudec's example of a non C*-simple group without amenable radical, see \cite{LM2016}*{Example 3.15}). However, the following proposition provides a partial converse.

\begin{prop} \label{prop:G-not-c-star-simple-implies-G-x-not-c-star-simple}
Let $G$ be a discrete group and let $X$ be a $G$-boundary. If there is $x \in X$ with $G_x$ C*-simple, then $G$ is C*-simple.
\end{prop}

The remainder of this section is devoted to the proof. Let $H \le G$ be a subgroup and let $E_H$ denote the canonical conditional expectation from $\ca_r(G)$ onto $\ca_r(H)$ defined by $E_H(\lambda_s)=\lambda_s$ for $s\in H$ and $E_H(\lambda_s)=0$ for $s \in G \setminus H$. Thus the canonical tracial state $\tau_\lambda$ on $\ca_r(G)$ corresponds to $E_{\{e\}}$.

For every non-trivial closed ideal $I$ of $\ca_r(G)$, the subspace $E_H(I)$ is a (possibly non-closed) non-zero ideal of $\ca_r(H)$. Indeed, $E_H(I)$ is an ideal since $E_H$ is a $\ca_r(H)$-bimodule map, and it is non-zero since $\tau_\lambda = \tau_\lambda \circ E_H$ is faithful on $\ca_r(G)$, i.e. does not vanish on non-zero positive elements.

For $x \in X$, the conditional expectation $E_{G_x}$ extends to a conditional expectation $E_x$ from $\rcp{C(X)}{G}$ onto $\ca_r(G_x)$ satisfying $E_x(f\lambda_s)=f(x)E_{G_x}(\lambda_s)$ for $f\in C(X)$ and $s\in G$. The proposition follows immediately from the following lemma.

\begin{lem}
Let $I$ be a non-trivial closed ideal in $\ca_r(G)$. Then for every $x \in X$, the closure of $E_{G_x}(I)$ is a non-trivial closed ideal of $\ca_r(G_x)$, where $E_{G_x}$ denotes the conditional expectation from $\ca_r(G)$ onto $\ca_r(G_x)$.
\end{lem}

\begin{proof}
Let $x\in X$ be given. It suffices to show $E_{G_x}(I)$ is not dense in $\ca_r(G_x)$.
By Lemma \ref{lem:ideal-gen-nontriv-ideal}, the closed ideal
$J$ of $C(X)\rtimes_{\mathrm{r}} G$ generated by $I$ is a proper ideal
which has zero intersection with $C(X)$. As usual, we consider the state on $C(X) + J$ obtained by composing the map
\[
C(X)+J \to (C(X)+J)/J \cong C(X)
\]
with point evaluation at $x$. Let $\phi_x$ be a state extension on $\rcp{C(X)}{G}$.

Since $C(X)$ is contained in the multiplicative domain (see $(\ref{mult-dom})$) of $\phi_x$, we have $\phi_x(f\lambda_s)=f(x)\phi_x(\lambda_s)$ for every $f\in C(X)$ and $s\in G$.  We claim that $\phi_x(\lambda_s)=0$ for every $s\in G\setminus G_x$. Indeed, there is $h \in C(X)$ such that $h(x)=1$ and $\operatorname{supp}(h)\cap s\operatorname{supp}(h)=\emptyset$,
and hence $\phi_x(\lambda_s)=\phi_x(h\lambda_sh)=\phi_x(h(sh)\lambda_s)=0$. It follows that $\phi_x=\phi_x\circ E_x$. Since $\phi_x(I)=0$, the ideal $E_{G_x}(I)=E_x(I)$ is not dense.
\end{proof}

\begin{rem} The recent preprint \cite{LM2016}*{Corollary 3.14} proves another result of this kind: if the boundary $X$ is \emph{extreme}, in the sense that every proper closed subset can be sent into any given open subset by a group element, then the existence of a point $x$ with non-amenable neighborhood stabilizer $G_x^0$ (i.e. the subgroup of elements a neighborhood of $x$ pointwise) implies the C*-simplicity of $G$.
\end{rem}

\section{Connes-Sullivan conjecture} \label{sec:connes-sullivan}

In this section we discuss a new operator algebraic property of discrete groups, which is stronger than C*-simplicity and still holds for many of the examples of Section \ref{sec:cohomology} and Section \ref{sec:linear}. It is connected to the following theorem of Zimmer \cite{Z1987}, which had been conjectured by Connes and Sullivan.

\begin{csconj}
Let $L$ be a connected Lie group, $H\le L$  a closed subgroup, and $m$ a $L$-quasi-invariant Borel measure on $L/H$. If $G$ is an abstract subgroup of $L$, whose action on $(L/H,m)$ is amenable in the sense of Zimmer, where $G$ is regarded as a discrete group, then $(\overline{G})^0$ is solvable, where $(\overline{G})^0$ denotes the connected component of the identity of the closure $\overline{G}$ of $G$ in $L$.
\end{csconj}

This conjecture was proved by Carri\`ere and Ghys \cite{cg} for the case when $G$ is a free group, and by Zimmer \cite{Z1987} in general. Breuillard and Gelander \cites{bg1,bg2} gave an alternative proof along the lines of Carri\`ere and Ghys. We will give a new interpretation of the Connes--Sullivan conjecture and generalize it.

Recall that if a non-singular action $G\curvearrowright (X,m)$ is \emph{amenable}, then the Koopman unitary representation $\pi$ of $G$ on $L^2(X,m)$ is \emph{weakly regular} \cites{kuhn,delaroche}. (For the purposes of this paper, this is essentially all that one has to know about the amenability of a non-singular action.) A unitary representation $\pi$ of $G$ on $\cH$ is said to be weakly regular if it is weakly contained in the regular representation $\lambda$ of $G$ on $\ell^2(G)$, i.e., if $\|\pi(f)\|_{\IB(\cH)} \le \|\lambda(f)\|_{\IB(\ell^2(G))}$ for every $f\in\IC[G]$. We observe that if $(X,m)=(L/H,m)$, then the Koopman representation $\pi\colon G\to\IB(\cH)$ extends to a \emph{continuous} unitary representation of the ambient group $L$. Here the continuity is with respect to the \emph{strong operator topology} (SOT, in short). A subbase of open sets for the SOT is given by $U(S,\xi,\varepsilon):=\{ T \in\IB(\cH) : \| (T - S) \xi \|_{\cH}<\varepsilon\}$ for $S\in\IB(\cH)$, $\xi\in\cH$, and $\varepsilon>0$. Therefore, Zimmer's theorem (the Connes--Sullivan conjecture) follows (see Remark \ref{CS-proof} below) if $G$ as above has the property (CS) defined below. We recall that the \emph{amenable radical} $R_a(G)$ of $G$ is the largest amenable normal subgroup of $G$.

\begin{defn}
We say a discrete group $G$ has property (CS) if the following statement holds: For every weakly regular unitary representation $\pi\colon G\to\IB(\cH)$, there exists an SOT-neighborhood $U$ of the identity in $\IB(\cH)$ such that $\pi^{-1}(U) \subset R_a(G)$.
\end{defn}

\begin{prop} Every discrete group $G$ with property (CS) and trivial amenable radical is C*-simple.
\end{prop}

\begin{proof} We will prove that (CS) implies the absence of amenable normalish subgroups. The proposition will then follow from Theorem \ref{thm:not-c-star-simple-implies-normalish}. If $H \le G$ is amenable, then the quasi-regular representation $\pi=\lambda_{G/H}$ of $G$ on $\ell^2(G/H)$ is weakly regular, but if $H$ is normalish, then $\pi(G)$ is clearly not discrete.
\end{proof}

\begin{rem}The converse is not true. The Baumslag-Solitar groups $G = BS(m,n)$ from Section \ref{sec:baumslag-solitar} are C*-simple, but do not have property (CS). In fact, the subgroup $\Lambda:=\ip{a} < G$ is not only amenable and normalish, but also commensurated in $G$, i.e., for every $t\in G$, the subgroup $\Lambda\cap t\Lambda t^{-1}$ has finite index in $\Lambda$. This implies that the Schlichting completion $\overline{G}$ (which is the closure of $G$ in $\mathrm{Sym}(G/\Lambda)$ with respect to the pointwise convergence topology) is a locally compact group, and the action of $G$ on $(\overline{G},m)$ is amenable (because it has $G/\Lambda$ as a factor).
\end{rem}

As we have explained above, the following theorem generalizes Zimmer's theorem. The proof relies on the Strong Tits Alternative \cite{breuillard} and the fact from Theorem \ref{thm:linear-groups-non-amenability-normalish} that a linear group with a trivial amenable radical has no amenable \emph{normalish} subgroups.

\begin{thm}\label{thm:linear}
Every linear group $G$ has property (CS).
\end{thm}

\begin{proof}
Let $G\le\mathrm{GL}_d(K)$ be any linear group and let $\pi\colon G\to\IB(\cH)$ be a weakly regular unitary representation of $G$. Fix a unit vector $\xi\in\cH$ and let $U_0:=U(1,\xi,\varepsilon)=\{ T\in\IB(\cH) : \|(T-1)\xi\|<\varepsilon\}$. We claim that there is $\varepsilon>0$ (in fact $\varepsilon$ will depend only on $d$, and not on $K$, $G$, $\pi$, nor $\xi$) such that $\pi^{-1}(U_0)$ generates an amenable subgroup of $G$.

By the Strong Tits Alternative \cite{breuillard}*{Theorem 1.1}, there is $N=N(d)\in\IN$ such that for any symmetric subset $S\subset G$ containing $1$, either $S^N$ contains two elements $\{ g_1,g_2\}$ which freely generate a non abelian free group or the subgroup $\ip{S}$ generated by $S$ is amenable. (Note that by the Tits Alternative, the group $\ip{S}$ is amenable if and only if every finitely generated subgroup of it is virtually solvable.)

Suppose the former possibility occurs and let $h:=(g_1+g_1^{-1}+g_2+g_2^{-1})/4\in\IC[G]$. By Kesten's theorem, one has $\|\pi(h)\|\le\|\lambda(h)\|=\sqrt{3}/2$. It follows that for any unit vector $\xi\in\cH$, one has
\[
\|\pi(g_1)\xi-\xi\|^2+\|\pi(g_2)\xi-\xi\|^2=4(1-\ip{\pi(h)\xi,\xi})\geq 4-2\sqrt{3}.
\]
Hence, $\|\pi(g_i)\xi-\xi\|\geq\sqrt{2-\sqrt{3}}=(\sqrt{6}-\sqrt{2})/2$ for some $i\in\{1,2\}$, and so $\|\pi(s)\xi-\xi\|\geq(\sqrt{6}-\sqrt{2})/2N$ for some $s\in S$. This means that $\varepsilon=(\sqrt{6}-\sqrt{2})/2N$ does the job, and the claim is proved.

Now, since $\Lambda:=\ip{\pi^{-1}(U_0)}$ is amenable, by Theorem \ref{thm:linear-groups-non-amenability-normalish} one may find a finite subset $F\subset G$ such that $\bigcap_{t\in F} t \Lambda t^{-1} \subset R_a(G)$. Hence, for $U=\bigcap_{t\in F} \pi(t)U_0\pi(t)^*$, one has $\pi^{-1}(U) \subset \bigcap_{t\in F} t \Lambda t^{-1} \subset R_a(G)$.
\end{proof}

\begin{rem}\label{CS-proof} To see that this implies indeed Zimmer's theorem, note that (CS) implies that there is a neighborhood of the identity $U$ in the Lie group $L$ such that the subgroup generated by $G \cap U$ is amenable. However every neighborhood of the identity in a connected Lie group generates the Lie group. It is then not difficult to see that $G \cap (\overline{G})^0 \cap U$ generates $G \cap (\overline{G})^0$, which is thus amenable, and therefore virtually solvable. As a connected Lie group $(\overline{G})^0$ has no subgroup of finite index, so it must be solvable.
\end{rem}

There are many more examples of groups with the property (CS). Recall the definition of the $n$-th bounded cohomology group of $G$ from Section \ref{sec:cohomology}. Hull and Osin (\cites{ho,O2013}) proved that if $G/R_a(G)$ is acylindrically hyperbolic, then $H_{\mathrm{b}}^2(G,\ell^2(G/R_a(G)))\neq0$. Thus, the next result implies in particular that acylindrically hyperbolic groups have property (CS).

\begin{thm}\label{thm:hyp}
Every group $G$ such that $H_{\mathrm{b}}^2(G,\ell^2(G/R_a(G)))\neq0$ has property (CS).
\end{thm}

Before presenting a proof of Theorem \ref{thm:hyp} in full generality, we will first present a proof for when $G$ is the free group $F_r$ of rank $r$, since the proof in this special case is particularly easy and illuminating.

\begin{proof}[Proof of Theorem \ref{thm:hyp} for $G=F_r$]
We identify $F_r$ with the Cayley graph with respect to the free generators, which is a regular tree of degree $2r$. Let $X$ be the visual boundary of $F_r$. Every triplet $\{x_1,x_2,x_3\}$ of distinct points in $X$ determines the center $c(x_1,x_2,x_3)\in F_r$ of the tripod spanned by $\{x_1,x_2,x_3\}$. Namely, $c(x_1,x_2,x_3)$ is the unique point in $F_r$ which belongs to the intersection $[x_1,x_2]\cap[x_2,x_3]\cap[x_3,x_1]$ of the geodesic paths $[x_i,x_j]$ connecting distinct points $x_i$ and $x_j$. We note that $c(sx_1,sx_2,sx_3)=sc(x_1,x_2,x_3)$ for every $s\in F_r$ and every $\{x_1,x_2,x_3\}$.

Take open subsets $U_i$, $i=1,2,3$, in $X$ such that $c(x_1,x_2,x_3)=1$ for all $(x_1,x_2,x_3)\in U_1\times U_2\times U_3$. Since $G$ acts minimally on $X$, for each $i$ there is a finite subset $F_i\subset G$ such that $X=F_iU_i$. On the other hand, by the equivariance of the center map,
\[
s(U_1 \times U_2 \times U_3) \cap (U_1 \times U_2 \times U_3) = \emptyset,
\]
for every $s \in G \setminus \{e\}$, where $G$ acts diagonally on $X^3$. Now pick $z\in X$ and put $A_i=\{ s\in G : sz \in U_i\}$. By the first condition, $A_i$ is left syndetic, i.e., $G=F_iA_i$. By the second condition,
\[
s(A_1 \times A_2 \times A_3) \cap (A_1 \times A_2 \times A_3) = \emptyset
\]
for every $s \in G \setminus \{e\}$.

Let $\pi\colon G\to\IB(\cH)$ be a weakly regular unitary representation. Then, the identity map on $\IC[G]$ extends to a continuous $*$-homomorphism $\mathrm{C}^*_\lambda(G)\ni\lambda(f)\mapsto\pi(f)\in\mathrm{C}^*_\pi(G)$. By Arveson's Extension Theorem (see \cite{bo}*{Theorem 1.6.1}), this map extends to a unital completely positive map $\varphi$ from $\IB(\ell^2(G))$ into $\IB(\cH)$. Furthermore, since $\ca_r(G)$ belongs to the multiplicative domain of $\varphi$, $\varphi(\lambda(s)T\lambda(t))=\pi(s)\varphi(T)\pi(t)$ for every $T\in\IB(\ell^2(G))$ and $s,t\in G$ (see \cite{bo}*{Proposition 1.5.6}).

We identify $\ell^\infty(G) \subset \IB(\ell^2(G))$ with the subalgebra of diagonal operators and let $a_i:=\varphi(\chi_{A_i})\in\IB(\cH)$, where $\chi_{A_i}$ denotes the characteristic function corresponding to $A_i$. Since
\[
\sum_{s\in F_i} \pi(s)a_i\pi(s)^*=\varphi \left( \sum_{s\in F_i}\lambda(s)\chi_{A_i}\lambda(s)^* \right)
=\varphi \left( \sum_{s\in F_i}\chi_{sA_i} \right) \geq 1,
\]
each $a_i$ is a non-zero positive operator. Thus $a=a_1\otimes a_2\otimes a_3\in\IB(\cH^{\otimes3})$ is a non-zero positive operator satisfying
\begin{align*}
\sum_{s\in G} \pi^{\otimes 3}(s)a\pi^{\otimes 3}(s)^*
 &= \sup_{E\subset G\,\mathrm{finite}}\varphi \left(\sum_{s\in E}
  \lambda^{\otimes 3}(s)\chi_{A_1\times A_2\times A_3}\lambda^{\otimes 3}(s)^* \right)\\
 &=\sup_{E\subset G\,\mathrm{finite}}\varphi \left(\sum_{s\in E}\chi_{s(A_1\times A_2\times A_3)} \right)\\
 &\le 1.
\end{align*}
Hence there cannot be a sequence $(s_n)_n$ in $G$ such that $\pi(s_n)\to1$ in the SOT.

In fact, a stronger statement holds. For any unit vector $\xi\in\cH^{\otimes3}$ with corresponding orthogonal projection denoted by $P_\xi$, one has
\begin{align*}
\sum_{s\in G} |\ip{\pi^{\otimes 3}(s) a^{1/2}\xi , a^{1/2}\xi}|^2
 &= \sum_{s\in G} \ip{\pi^{\otimes 3}(s) a^{1/2}P_\xi a^{1/2}\pi^{\otimes 3}(s)^* a^{1/2}\xi,a^{1/2}\xi}\\
 &\le \ip{ a^{1/2}\xi,a^{1/2}\xi}\\
 &\le 1.
\end{align*}
This means that $\pi^{\otimes 3}$ contains a non-zero subrepresentation which is unitarily equivalent to a subrepresentation of the regular representation $\lambda$.
\end{proof}

\begin{proof}[Proof of Theorem \ref{thm:hyp}]
Let $V$ be a coefficient $G$-module, let $V_*$ denote the predual of $V$ and let $\pi\colon G\to\IB(\cH)$ be an weakly regular unitary representation of $G$. The group $G$ acts isometrically on $\IB(\cH)$ by conjugation. Noting that $\ell^\infty(G^n,V)=\IB(V_*,\ell^\infty(G^n))$, we consider the complex
\[
0\longrightarrow \IB(V_*,\IB(\cH))^G\stackrel{d_1'}{\longrightarrow}
\IB(V_*,\IB(\cH^{\otimes 2}))^G\stackrel{d_2'}{\longrightarrow}
\IB(V_*,\IB(\cH^{\otimes 3}))^G\stackrel{d_3'}{\longrightarrow}\cdots,
\]
where $G$ acts on $\IB(V_*,\IB(\cH^{\otimes n}))$ by $(s\cdot F)(v_*)=\pi^{\otimes n}(s)F(s^{-1}v_*)\pi^{\otimes n}(s)^*$ and $(d_n' F)(v_*)=\partial_n (F(v_*))$, where $\partial_n\colon\IB(\cH^{\otimes n})\to\IB(\cH^{\otimes(n+1)})$ is defined by
\[
\partial_n(T) = \sum_{j=0}^{n}(-1)^j W_j^*(1\otimes T)W_j,
\]
where $W_j$ denotes the unitary operator associated with the permutation $(j\ 0\ \cdots\  j-1\ j+1\ \cdots\  n)$. We claim that $H_{\mathrm{b}}^n(G,V)$ embeds into $\ker d'_{n+1} / \ran d'_n$. Before proving the claim, we first explain how this finishes the proof.

Since $H_{\mathrm{b}}^2(G,V)\neq0$ for $V=\ell^2(G/R_a(G))$,
one has $\IB(V_*,\IB(\cH^{\otimes 3}))^G\neq0$ by the claim. Take a non-zero $F\in\IB(V_*,\IB(\cH^{\otimes 3}))^G$ and vectors $v_*\in V_*$ and $\xi\in\cH$ such that $\ip{F(v_*)\xi,\xi}=1$. Let $U_0:=U(1,\xi,(4\|F(v_*)\|\|\xi\|)^{-1})$. Then, for every $s\in\pi^{-1}(U_0)$, one has
\[
|\ip{F(sv_*)\xi,\xi}|=|\ip{\pi(s)F(v_*)\pi(s)^*\xi,\xi}|\geq1/2.
\]
Let $Q\colon G\to G/R_a(G)$ be the quotient map. Since $sv_*\to0$ weakly as $Q(s)\to\infty$, the subset $Q( \pi^{-1}(U_0) )$ is finite. Since every nontrivial conjugacy class of $G/R_a(G)$ is infinite \cite{O2013}*{Theorem 8.4}, there is a finite subset $F\subset G$ such that $\bigcap_{t\in F} t\pi^{-1}(U_0) t^{-1}\subset R_a(G)$. Hence, for $U=\bigcap_{t\in F} \pi(t)U_0\pi(t)^*$, one has $\pi^{-1}(U) \subset R_a(G)$.

For the proof of the claim, we recall the following general fact about completely positive maps: For any Hilbert spaces $\cK_1,\cK_2,\cL$ and any unital completely positive map
$\theta\colon\IB(\cK_1)\to\IB(\cK_2)$, there is a unique unital completely positive map $\theta\otimes\id\colon \IB(\cK_1\otimes\cL)\to \IB(\cK_2\otimes\cL)$  that satisfies $(\id\otimes P_{\cL_0})(\theta\otimes\id) = (\theta\otimes\id)(\id\otimes P_{\cL_0})$ for compressions $P_{\cL_0}\colon\IB(\cL)\to\IB(\cL_0)$ with respect to finite dimensional subspaces $\cL_0\subset\cL$. Indeed, if we make the identification $\cL=\ell^2(I)$, then elements $T$ in $\IB(\cK_1\otimes\cL)$ can be viewed as $I\times I$ matrices $[T_{i,j}]_{i,j\in I}$ with entries $T_{i,j}$ in $\IB(\cK_1)$ such that
\[
\| T \|=\sup_{I_0 \subset I\,\mathrm{finite}}\| [T_{i,j}]_{i,j\in I_0}\|_{\IB(\cK_1\otimes\ell^2(I_0))}<\infty.
\]
Under this identification, the unital completely positive map $\theta\otimes\id$ is defined by $(\theta\otimes\id)([T_{i,j}]_{i,j})=[\theta(T_{i,j})]_{i,j}$. We can similarly define $\id\otimes\theta'$, and $\theta\otimes\id$ and $\id\otimes\theta'$ will commute if at least one of $\theta$ or $\theta'$ is weak*-continuous. Now take unital completely positive maps $\Phi\colon\IB(\ell^2(G))\to\IB(\cH)$ and $\Psi\colon\IB(\cH)\to\IB(\ell^2(G))$ which are $G$-equivariant. (The existence of such maps follows from Arveson's Extension Theorem as in the Proof of Theorem \ref{thm:hyp} for $G=F_r$.)

We look at $\varphi=\Phi|_{\ell^\infty(G)}$ and $\psi=E\circ\Psi$, where $E$ is the completely positive projection from $\IB(\ell^2(G))$ onto the diagonal $\ell^\infty(G)$. We define the $G$-equivariant unital completely positive maps $\varphi_n\colon\ell^\infty(G^{n+1})\to\IB(\cH^{\otimes(n+1)})$ by $\varphi_0=\varphi$ and $\varphi_n=(\varphi_{n-1}\otimes\id)(\id\otimes\varphi)$. We similarly define $\psi_n\colon\IB(\cH^{\otimes(n+1)})\to\ell^\infty(G^{n+1})$.

The map $\varphi_n$ extends to a map from $\ell^\infty(G^{n+1},V)$ to $\IB(V_*,\IB(H^{\otimes(n+1)}))$ by identifying $\ell^\infty(G^{n+1},V)$ with $\IB(V_*,\ell^\infty(G^{n+1}))$ and composing $\varphi_n$ with $F \in \IB(V_*, \ell^\infty(G^{n+1}))$. We continue to denote this extension by $\varphi_n$. Similarly, $\psi_n$ extends to a map from $\IB(V_*,\IB(H^{\otimes (n+1)}))$ to $\ell^\infty(G^{n+1},V)$ that we continue to denote by $\psi_n$.

These maps then become morphisms of complexes. Indeed, let us verify that $d'_n\varphi_{n-1}=\varphi_n d_n$ by induction on $n$. The case $n=1$ is obvious, and letting $\iota_{n+1}(T) = T\otimes1$ gives
\begin{align*}
d'_{n+1}\varphi_{n}
 &=(d'_{n}\otimes\id +(-1)^{n+1}\iota_{n+1})\varphi_n\\
 &=(\varphi_n d_n \otimes\id)(\id\otimes\varphi)+(-1)^{n+1}(\varphi_n\otimes1)\iota_{n+1}\\
 &=\varphi_{n+1}(d_n\otimes\id+(-1)^{n+1}\iota_{n+1})\\
 &=\varphi_{n+1}d_{n+1}.
\end{align*}
Hence, $(\psi_n\varphi_n)_n$ is a $G$-equivariant morphism on the complex defining the bounded cohomology. As such, it induces the identity maps on the cohomology groups by Lemma~7.2.6 in \cite{M2001}. It follows that $H_{\mathrm{b}}^n(G,V)$ embeds into $\ker d'_{n+1} / \ran d'_n$, proving the claim.
\end{proof}


\end{document}